\def\notdivides{\mathrel{\kern-3pt\not\!\kern3.5pt\bigm|}}
\def\smallnotdivides{\mathrel{\kern-2pt\not\!\kern3.5pt\vert}}
\newcommand{\fix}{\operatorname{\mathsf F}}
\newcommand{\ord}{\operatorname{ord}}
\newcommand{\laurent}{\mathbb{Z}[t^{\pm 1}]}
\renewcommand{\le}{\leqslant}
\renewcommand{\ge}{\geqslant}
\renewcommand{\epsilon}{\varepsilon}
\newtheorem{theorem}{Theorem}
\newtheorem{lemma}[theorem]{Lemma}
\newtheorem{proposition}[theorem]{Proposition}
\newtheorem{corollary}[theorem]{Corollary}
\newdefinition{definition}[theorem]{Definition}
\newdefinition{remark}[theorem]{Remark}
\newdefinition{example}[theorem]{Example}
\newproof{proof}{Proof}
\title{Towards a P{\'o}lya--Carlson dichotomy\\
for algebraic dynamics}
\author[jb]{Jason Bell}
\ead{jpbell@uwaterloo.ca}
\author[rm]{Richard Miles}
\author[tw]{Thomas Ward\corref{cor1}}
\ead{t.b.ward@durham.ac.uk}
\address[jb]{Department of Pure Mathematics, University of Waterloo,
ON, CANADA N2L 3G1}
\address[rm]{School of Mathematics, University of East Anglia, NR4 7TJ, UK}
\address[tw]{Department of Mathematical Sciences, Durham University, DH1 3LE, UK}
\date{\today}
\begin{document}

\begin{abstract}
We present results and background rationale in support of
a P{\'o}lya--Carlson dichotomy between rationality and a
natural boundary for the analytic behaviour of dynamical zeta
functions of compact group automorphisms.
\end{abstract}
\maketitle
\section{Introduction}

Let~$\theta:X\to X$ be a continuous map on a compact
metric space with the property that
\[
\fix_\theta(n)=\vert\{x\in X\mid \theta^nx=x\}\vert
\]
is finite for all~$n\ge1$. The associated
dynamical zeta function
\[
\zeta_\theta(z)=\exp\sum_{n\ge1}\frac{\fix_\theta(n)}{n}z^n
\]
is an invariant of topological conjugacy for the map~$\theta$,
and the analytic properties of the zeta function and weighted
versions of it may be used to study orbit-growth and other
properties of~$\theta$. In particular, for situations in which
the zeta function has a finite positive radius of convergence
and a meromorphic extension beyond the radius of convergence
Tauberian methods may be used to relate analytic properties of
singularities of the zeta function to orbit-growth properties
of the map. For smooth maps with sufficiently uniform
hyperbolic behaviour, the zeta function is rational (see
Manning~\cite{MR0288786}), and in particular hyperbolic toral
automorphisms have this property. On the other hand, for some
natural families of dynamical systems the arithmetic and
analytic properties of the zeta function are known to be very
different. The third author~\cite{MR1702897}, for a family of
isometric extensions of the full shift on~$p$ symbols ($p$ a
prime) parametrised by a probability space, shows that with the
possible exception of two values of~$p$ the dynamical zeta
function is not an algebraic function almost surely. Everest,
Stangoe and the third author~\cite{MR2180241} studied the specific
automorphism of a compact group dual to the
automorphism~$r\mapsto2r$ on~$\mathbb{Z}[\frac16]$, showing it
to have a natural boundary on the circle~$\vert z\vert=\frac12$
(we refer to Segal~\cite[Ch.~6]{MR2376066} for a convenient
introduction to the theory of complex functions with natural
boundary). Buzzi~\cite{MR1925634} shows that a certain weighted
random zeta function has a natural boundary. In a different
direction, for dynamical systems with a polynomial growth bound
on the number of periodic orbits, a more natural complex
function that captures all the periodic point data is given by
an orbit Dirichlet series, and many natural examples are known
to have infinitely many singularities on the critical line with
no lower bound on their separation by work of Everest, Stevens
{\it et al.}~\cite{MR2550149}, or even to have a natural
boundary in calculations of Pakapongpun and the third
author~\cite{pw}.

One of the fundamental links between the arithmetic
properties of the coefficients of a complex power
series and its analytic behaviour is given
by the P{\'o}lya--Carlson theorem~\cite{MR1544479},~\cite{MR1512473}.

\medskip{\noindent\bf P\'{o}lya--Carlson Theorem.}
{\it A power series with integer coefficients
and radius of convergence~$1$ is either rational or has the
unit circle as a natural boundary.}\medskip

Unfortunately, while there are some natural group
automorphisms whose zeta function has radius of convergence~$1$,
many do not -- and for most (in cardinality) it is not
at all clear how to compute the radius of convergence
without refined information about the arithmetic
of linear recurrence sequences.

The suggestion we wish to explore here is that
there is a P{\'o}lya--Carlson dichotomy for
group automorphisms in exactly the same sense:
the zeta function of a compact group automorphism
is either rational or admits a natural boundary at
its radius of convergence. We cannot prove this statement,
but will show it for a large class of automorphisms
of connected finite-dimensional groups (these groups
are called solenoids). In addition, the
arguments here do we hope make this suggestion plausible,
and clarify what sort of issues would arise in attempting
to prove the full statement.

In addition to the P{\'o}lya--Carlson theorem, we will
make essential use of the Hadamard quotient theorem
(see van der Poorten~\cite{MR929097} and Rumely~\cite{MR990517}).

\medskip{\noindent\bf Hadamard Quotient Theorem.}
{\it Let~$\mathbb K$ be a field of characteristic zero, and suppose
that~$\sum_{n\ge0}b_nz^n$ and~$\sum_{n\ge1}c_nz^n$ in~$\mathbb{K}[[z]]$ are
expansions of rational functions.
If there is a finitely-generated ring~$R$ over~$\mathbb{Z}$
with~$a_n=\frac{b_n}{c_n}\in R$ for all~$n\ge1$, then~$\sum_{n\ge0}a_nz^n$ is
also
the expansion of a rational function.}\medskip

Given the fact that there are well-known arithmetical
constraints on the possible sequences~$(\fix_{\theta}(n))$ of
periodic point counts for any map (see Puri and the third
author~\cite{MR1873399}), and additional (less well-known)
constraints in the case of group automorphisms (see the thesis
of Moss~\cite{moss} for more details, or the survey of Staines {\it et
al.}~\cite{survey} for an example of linear recurrent divisibility
sequence that counts periodic points for some map but that
cannot be the periodic point count for a group automorphism), we should point out that the suggested
dichotomy certainly cannot hold for all maps. To see this,
notice for example that there is a continuous map~$\theta$
with~$\fix_{\theta}(n)=\binom{2n}{n}$ for all~$n\ge1$ (by work
of Puri and the third author~\cite{MR1873399}) and so
\[
\sum_{n\ge1}\fix_{\theta}(n)z^n=\frac{1}{\sqrt{1-4z}}-1.
\]

In the parameter space of all compact abelian group automorphisms over which we are suggesting the
dichotomy holds, the results below are restricted in three
different ways. Removing the assumption of connectedness --- or
at least dealing with the totally disconnected case --- is
likely to be relatively straightforward because of the
additional information available about the arithmetic properties
of linear recurrence sequences over finite fields, and it is expected
that the most interesting arithmetic questions occur in the
connected setting considered here. The bound on the topological
dimension of the compact group is needed to avoid Salem
numbers, the familiar bane of several investigations in
arithmetic and dynamics. Removing this bound on the face of it
would involve subtle Diophantine problems involving
linear forms in logarithms. The third way in
which we restrict the cases we deal with concerns a subset~$S$
of a countable collection~$P$ of places of a number field. We
are able to handle the situations in which~$S$ is finite or
infinite but extremely thin, and the case in which~$P\setminus
S$ is finite. Understanding the general case seems to require
different techniques.

\section{One-dimensional solenoids}

A one-dimensional solenoid~$X$ has a Pontryagin dual group
isomorphic to a subgroup of~$\mathbb{Q}$. Given an
automorphism~$\theta:X\rightarrow X$ of a one-dimensional solenoid,
the dual group naturally carries the structure of a
module over a ring of the form~$\mathbb{Z}[r^{\pm 1}]$,
where~$r\in\mathbb{Q}^\times$ and multiplication by~$r$
corresponds to application of the dual
automorphism~$\widehat{\theta}$. To avoid trivial (from a
dynamical point of view, non-ergodic) automorphisms, we
assume~$r\neq\pm 1$ throughout. For a more detailed account of
these systems, we refer to the papers of Chothi, Everest
and the third author~\cite{MR1461206} and the recent survey by Staines {\it
et al.}~\cite{survey}. Relevant background and references for
all the results we will need on linear recurrence sequences may
be found in the monograph of Everest, van der Poorten,
Shparlinski and the third author~\cite{MR1990179}.

There is a convenient formula for~$\fix_\theta(n)$, which we
shall use as the basis for our discussion. Since there may be
uncountably many non-conjugate automorphisms of
one-dimensional solenoids that share the same zeta function (we
refer to Miles~\cite[Ex.~1]{MR2441142} for an example), this
not only avoids complex classification problems for these
systems but has the
added advantage that no background in algebraic dynamics is
needed to access the main results of this section; instead the
formula~\eqref{one_solenoid_pp_formula} below serves as a
starting point.

Let~$\mathcal{P}(\mathbb{Q})$ denote the set of rational
primes. For any~$x\in\mathbb{Q}$
and~$S\subset\mathcal{P}(\mathbb{Q})$, write~$|x|_S=\prod_{p\in
S}|x|_p$. Miles~\cite[Th.~3.1]{MR2441142} shows that there is a
distinguished set of primes~$T\subset\mathcal{P}(\mathbb{Q})$
such that~$\fix_\theta(n)=|r^n-1|_T^{-1}$, for all~$n\ge 1$,
and that~$|r|_p=1$ for all~$p\in T$ (see
Miles~\cite[Rmk.~1]{MR2441142}). Therefore, by the Artin
product formula (see Weil~\cite[Sec.~IV.4]{MR0234930} for a
complete treatment of the valuation theory of number fields
used here) we have
\begin{equation}\label{one_solenoid_pp_formula}
\fix_\theta(n)=|r^n-1|\cdot|r^n-1|_S,
\end{equation}
where~$S=\mathcal{P}(\mathbb{Q})\setminus T$.
Furthermore,~$|r|_p\neq 1$ necessarily implies that~$p\in S$.

If~$T=\varnothing$, then we obtain the trivial
sequence~$(1,1,\dots)$, so we assume that~$T\neq\varnothing$
throughout. In this section, we do not discuss further the
problem of determining which dynamical systems give rise to the
same formula~\eqref{one_solenoid_pp_formula}, it is sufficient
to note that for any~$r\neq\pm 1$ and
any~$S\subset\mathcal{P}(\mathbb{Q})$ with~$|r|_p=1$ for
all~$p\in\mathcal{P}(\mathbb{Q})\setminus S$, the
automorphism~$x\mapsto rx$ on the
ring~$R=\mathbb{Z}[\frac{1}{p}:p\in S]$ dualizes to an
automorphism~$\theta$ of the solenoid~$X=\widehat{R}$, and its
sequence of periodic point counts is given
by~\eqref{one_solenoid_pp_formula}. We refer
to~\cite{MR1461206} for further details of this construction.

From now on, we assume that~$r\in\mathbb{Q}\setminus\{\pm1\}$
is fixed and consider the
sequences~$(f_S)$
defined by~$f_S(n)=|r^n-1|\cdot|r^n-1|_S$ for~$n\ge1$
that arise by varying the set~$S$.
We also write~$F_S(z)=\sum_{n\ge1}f_S(n)z^n$ for the
associated ordinary generating function.
We are able to concentrate on the
ordinary generating function
rather than the
zeta function
(and hence
have
more ready
access to the theory of linear recurrence sequences)
because of the following fundamental relationship.

\begin{lemma}\label{zeta_generating_function_link}
Let~$F(z)=\sum_{n\geqslant 1}\fix_\theta(n)z^n$.
If~$\zeta_\theta$ is rational then~$F$ is rational.
If~$\zeta_\theta$ has analytic continuation beyond
its circle of convergence, then so too does~$F$.
In particular, the existence of a natural boundary
at the circle of convergence for~$F$ implies the
existence of a natural boundary for~$\zeta_\theta$.
\end{lemma}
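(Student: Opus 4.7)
The plan is to exploit the logarithmic-derivative identity
\[
F(z)=z\,\frac{\zeta_\theta'(z)}{\zeta_\theta(z)},
\]
obtained by term-by-term differentiation of $\log\zeta_\theta(z)=\sum_{n\ge1}\fix_\theta(n)z^n/n$ on the common disc of convergence (equivalently, as an identity of formal power series, using $\zeta_\theta(0)=1$ to invert $\zeta_\theta$ at the origin). From this identity the rational case is immediate: if $\zeta_\theta$ is rational then $\zeta_\theta'$ is rational and $1/\zeta_\theta$ is rational and regular at the origin, so their product $F$ is rational.

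For the continuation statement, I would suppose that $\zeta_\theta$ extends to a meromorphic function on a domain $\Omega$ strictly larger than its open disc of convergence. Then $\zeta_\theta'$ is meromorphic on $\Omega$, and the quotient $\zeta_\theta'/\zeta_\theta$ is meromorphic on $\Omega$ as well, with at worst simple poles at the zeros and poles of $\zeta_\theta$. Multiplication by $z$ gives a meromorphic extension of $F$ to the same domain $\Omega$. To interpret this as an extension past the circle of convergence of $F$ itself, I would observe that $\zeta_\theta=\exp(L)$ with $L(z)=\sum_{n\ge1}\fix_\theta(n)z^n/n$, and that $L$ has the same radius of convergence as $F$; hence the radius of convergence $R_\zeta$ of $\zeta_\theta$ is at least the radius $R_F$ of $F$, and any region into which $\zeta_\theta$ is extended beyond $\{\vert z\vert=R_\zeta\}$ is automatically a region beyond $\{\vert z\vert=R_F\}$.

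The final ``in particular'' clause is then the contrapositive: were $\zeta_\theta$ extendable past some point of its own circle of convergence, the previous step would produce a meromorphic extension of $F$ past a point on $\{\vert z\vert=R_F\}$, contradicting the assumption that $F$ has a natural boundary there. The argument is essentially formal, and the only subtlety worth flagging is that one must allow meromorphic (not merely analytic) continuations throughout, because the logarithmic-derivative operation introduces simple poles at zeros of $\zeta_\theta$; I do not foresee any genuine obstacle beyond this bookkeeping.
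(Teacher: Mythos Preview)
Your proposal is correct and is precisely the paper's approach: the paper's entire proof is the single observation that $F(z)=z\,\zeta_\theta'(z)/\zeta_\theta(z)$. You have simply fleshed out the consequences of this identity in more detail than the paper does---including the careful comparison of the radii $R_\zeta\ge R_F$ and the remark that the logarithmic derivative may acquire isolated poles at zeros of $\zeta_\theta$, so that ``analytic continuation'' of $F$ should be read as meromorphic continuation (which still rules out a natural boundary).
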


\begin{proof}
This follows from the fact that~$F(z)=z\zeta_\theta'(z)/\zeta_\theta(z)$.
\qed\end{proof}

In order to handle the sequence~$f_S=(f_S(n))$ more easily, we need a
way to evaluate expressions of the form~$|r^n-1|_p$
when~$|r|_p=1$. To this end, the following lemma is useful, and
we state this in the more general setting of number fields,
since this will be needed in the next section. Moreover, it
will be necessary to deal with number fields of unknown degree for
the putative linear recurrence sequences that arise
inside arguments by contradiction. Let~$\mathbb{K}$ be an
algebraic number field, and let~$\mathcal{P}(\mathbb{K})$
denote the set of places of~$\mathbb{K}$. For a
place~$v\in\mathcal{P}(\mathbb{K})$ with~$|\xi|_v=1$,
let~$\mathfrak{K}_v$ denote the residue class field, let~$m_v$
denote the multiplicative order of the image of~$\xi$
in~$\mathfrak{K}_v^\times$, and let~$\varrho_v$ denote the
residue degree. We assume that~$|\cdot|_v$ is normalized so
that the Artin product formula holds (we refer to Ramakrishnan
and Valenza~\cite{MR1680912} or
Weil~\cite[Sec.~IV.4]{MR0234930} for the details).

\begin{lemma}\label{fundamental_evaluation_lemma}
Let~$p$ be the characteristic of~$\mathfrak{K}_v$. There exists
a non-negative integer constant~$D_v\geqslant 0$ and a rational
constant~$C_v>0$, such that for any~$n\in\mathbb{N}$,
\[
|\xi^n-1|_v = \left\{
\begin{array}{ll}
1 & \textrm{ if } m_v\nmid n, \\
C_v |n|_p^{\varrho_v} & \textrm{ if } m_v\mid n\mbox{ and }
 \ord_p(n)> D_v,
\end{array}
\right.
\]
and~$|\xi^n-1|_v$ assumes at most finitely many values otherwise.
\end{lemma}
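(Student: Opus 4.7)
The plan is to reduce to the principal units and then invoke the $p$-adic logarithm. If $m_v\nmid n$, then by the definition of $m_v$ the image of $\xi^n$ in $\mathfrak{K}_v^\times$ is not~$1$, so $\xi^n-1$ is a unit in~$\mathcal{O}_v$ and $|\xi^n-1|_v=1$.

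For $m_v\mid n$, write $n=m_vk$ and set $\eta=\xi^{m_v}\in 1+\mathfrak{m}_v$, so that $\xi^n-1=\eta^k-1$; since $m_v\mid q_v-1$ is coprime to~$p$, we have $\ord_p(k)=\ord_p(n)$. The main input is that on the subgroup $1+\mathfrak{m}_v^e$ with $e>e_v/(p-1)$ (where $e_v$ is the ramification index of $v\mid p$) the $p$-adic logarithm is an isometric group isomorphism onto $\mathfrak{m}_v^e$, giving the lifting-the-exponent identity
\[
\ord_v(u^\ell-1)=\ord_v(u-1)+\ord_v(\ell),\qquad \ell\in\mathbb{Z}\setminus\{0\},
\]
for any $u$ in that subgroup. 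To move $\eta$ into such a subgroup I iterate $p$-th powers: an ultrametric comparison of the terms $px$ and $x^p$ in $(1+x)^p-1$ shows that $\ord_v(\eta^{p^d}-1)$ strictly increases with $d$ until it exceeds $e_v/(p-1)$, and thereafter increases by exactly $e_v$ per step. Let $d_v$ be the smallest $d\ge 0$ past the threshold and set $D_v=\max\{d_v-1,0\}$.

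For $\ord_p(n)>D_v$, write $k=p^{d_v}\ell$ with $\ord_p(\ell)=\ord_p(n)-d_v\ge 0$ and apply the identity with $u=\eta^{p^{d_v}}$; using $\ord_v(\ell)=e_v\ord_p(\ell)$, the valuation $\ord_v(\xi^n-1)$ becomes affine in $\ord_p(n)$ with integer coefficients, so after exponentiating by $|\cdot|_v$ it takes the stated form $C_v|n|_p^{\varrho_v}$, with the $n$-independent integer contributing the positive rational $C_v$ and the affine piece giving $|n|_p^{\varrho_v}$ under the Artin-product normalisation of~$|\cdot|_v$. In the remaining case $m_v\mid n$ with $a:=\ord_p(n)\le D_v$, write $k=p^ak'$ with $p\nmid k'$; then the geometric factorisation
\[
(\eta^{p^a})^{k'}-1=(\eta^{p^a}-1)\sum_{j=0}^{k'-1}(\eta^{p^a})^j,
\]
combined with $\eta^{p^a}\equiv 1\pmod{\mathfrak{m}_v}$ and $p\nmid k'$, makes the second factor a unit, so $\ord_v(\eta^k-1)=\ord_v(\eta^{p^a}-1)$, a value depending only on $a\in\{0,1,\dots,D_v\}$. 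Thus $|\xi^n-1|_v$ takes at most $D_v+1$ values in this regime.

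The main technical hurdle is the bookkeeping around the logarithm's convergence threshold~$e_v/(p-1)$: one must verify both that finitely many $p$-th powers suffice to cross it (finiteness of $d_v$) and that beyond it the valuation increments by exactly $e_v$ per further step (so that $C_v$ is a power of~$p$ rather than an unwieldy irrational). Both facts come from comparing the binomial terms in $(1+x)^p-1=\sum_{j=1}^p\binom{p}{j}x^j$, with slightly more care required when $p=2$ since the threshold $e_v/(p-1)=e_v$ is then the largest.
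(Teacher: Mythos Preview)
The paper does not actually prove this lemma; it simply cites \cite[Lem.~4.9]{MR2308145}. Your argument---reducing to the principal unit~$\eta=\xi^{m_v}$, iterating $p$-th powers until one lands in the region where the $p$-adic logarithm is an isometry, and then reading off the lifting-the-exponent identity $\ord_v(u^\ell-1)=\ord_v(u-1)+\ord_v(\ell)$---is the standard route and almost certainly coincides with what the cited reference does. Your handling of the residual range $m_v\mid n$, $\ord_p(n)\le D_v$ via the geometric factorisation (so that the cofactor is $\equiv k'\pmod{\mathfrak{m}_v}$, hence a unit when $p\nmid k'$) is clean and correct.

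One bookkeeping point to revisit: your computation gives $\ord_v(\xi^n-1)$ affine in $\ord_p(n)$ with slope~$e_v$, so under the Weil normalisation $|\pi_v|_v=p^{-\varrho_v}$ you obtain
\[
|\xi^n-1|_v \;=\; C_v\,|p|_v^{\ord_p(n)} \;=\; C_v\,|n|_p^{\,e_v\varrho_v},
\]
not $C_v\,|n|_p^{\varrho_v}$ as you assert in matching the stated form. Either the lemma as printed tacitly restricts to unramified~$v$ (so $e_v=1$), or the exponent in the displayed formula should carry an~$e_v$; Remark~\ref{fundamental_lemma_extension} is consistent with the version involving $|p|_v^{\ord_p(n)}$, and the downstream uses in the paper only need that $|\xi^n-1|_v$ is a fixed rational times a fixed integer power of~$|n|_p$ once $\ord_p(n)$ is large. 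So this does not undermine your argument, but the sentence ``so after exponentiating \ldots\ it takes the stated form $C_v|n|_p^{\varrho_v}$'' glosses over a genuine discrepancy that you should flag rather than absorb.
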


\begin{proof}
See~\cite[Lem.~4.9]{MR2308145}, for example.
\qed\end{proof}

\begin{remark}\label{fundamental_lemma_extension}
The proof of~\cite[Lem.~4.9]{MR2308145} shows in fact
that~$D_v$ is the least positive integer~$D$ such that
\[
|\xi^{m_p p^{D+1}}-1|_v=|p|_v|\xi^{m_p p^{D}}-1|_v,
\]
and also that
\[
C_v=|p|_v^{-D_v}|\xi^{m_p p^{D_v}}-1|_v.
\]
Furthermore, in the particular
case~$\mathbb{K}=\mathbb{Q}$,~$D_p=0$ when~$p>2$,
and~$D_p\in\{0, 1\}$ when~$p=2$.
\end{remark}

We begin by establishing precisely when~$F_S$ is rational.

\begin{theorem}\label{rationality_theorem}
The function~$F_S$ is rational if and only
if~$|r|_p\neq 1$ for all~$p\in S$.
\end{theorem}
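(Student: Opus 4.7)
The plan for $(\Leftarrow)$ is direct. If $|r|_p\neq 1$ for every $p\in S$, then each factor $|r^n-1|_p$ equals either $|r|_p^n$ (when $|r|_p>1$) or $1$ (when $|r|_p<1$), so $|r^n-1|_S=C^n$ with $C=\prod_{p\in S,\,|r|_p>1}|r|_p$. Writing $r=a/b$ in lowest terms, $f_S(n)=|r^n-1|\cdot C^n$ is (after possibly splitting by parity if $a$ or $b$ is negative) an integer linear combination of $n$-th powers, hence a linear recurrence sequence, so $F_S$ is rational.

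For $(\Rightarrow)$ the plan is to argue the contrapositive. Assume $W:=\{p\in S:|r|_p=1\}$ is nonempty and, for contradiction, that $F_S$ is rational. Since $p\in W$ implies $p\nmid ab$, one has $v_p(r^n-1)=v_p(a^n-b^n)$, so
\[
f_S(n)=|a^n-b^n|\cdot\prod_{p\in W}p^{-v_p(a^n-b^n)}.
\]
The quotient $e_n:=|a^n-b^n|/f_S(n)=\prod_{p\in W}p^{v_p(a^n-b^n)}$ is therefore a positive integer. Because $(|a^n-b^n|)$ is a linear recurrence sequence and $(f_S(n))$ is one by hypothesis, and the quotient sequence lies in the finitely generated ring $\mathbb{Z}$, the Hadamard Quotient Theorem forces $(e_n)$ to be a linear recurrence sequence, i.e.\ $E(z)=\sum_{n\ge 1}e_nz^n$ to be rational.

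The contradiction is then produced by analysing $e_n$ at a single chosen prime $p_0\in W$. Passing to the sub-AP $n=m_{p_0}k$ yields another linear recurrence sequence $b_k:=e_{m_{p_0}k}$, and the lifting-the-exponent computation underlying Lemma~\ref{fundamental_evaluation_lemma} delivers $v_{p_0}(b_k)=e+v_{p_0}(k)$ for a constant $e\ge 1$, while the remaining primes $p\in W\setminus\{p_0\}$ contribute a factor coprime to $p_0$. In the clean case $W=\{p_0\}$ this gives $b_k=p_0^{e+v_{p_0}(k)}$ exactly, and decomposing by $v_{p_0}(k)$ produces
\[
\sum_{k\ge 1}p_0^{v_{p_0}(k)}z^k=\sum_{j\ge 0}p_0^j\left(\frac{z^{p_0^j}}{1-z^{p_0^j}}-\frac{z^{p_0^{j+1}}}{1-z^{p_0^{j+1}}}\right),
\]
whose poles exhaust the $p_0^j$-th roots of unity for every $j\ge 0$ and so accumulate on $|z|=1$. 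This is a natural boundary, so $B(z)=\sum_k b_kz^k$ cannot be rational, contradicting the Hadamard conclusion.

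The hardest step will be extending this pole-accumulation argument beyond $W=\{p_0\}$. The cofactor $g(k)=b_k/p_0^{e+v_{p_0}(k)}$, coprime to $p_0$, then needs to be controlled: a workable strategy is either to reapply the Hadamard Quotient Theorem to the $p_0$-primary part of $(b_k)$ so as to isolate $(p_0^{v_{p_0}(k)})$, or to restrict further to a residue class modulo an auxiliary modulus $\mu$ coprime to $p_0$ on which the non-$p_0$ contributions stabilise, reducing the analysis to an affine shift of the $W=\{p_0\}$ computation.
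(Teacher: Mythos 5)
Your $(\Leftarrow)$ direction is fine and matches the paper. Your setup for $(\Rightarrow)$ contains a nice idea that is arguably cleaner than the paper's: by passing to $e_n=(a^n-b^n)/f_S(n)=\prod_{p\in W}p^{v_p(a^n-b^n)}$, the Hadamard quotient lives in $\mathbb{Z}$ regardless of whether $W$ is finite or infinite. The paper instead divides $f_S(n)$ by $a^n-b^n$ to get $f(n)=|r^n-1|_{S'}$, which only lies in a finitely generated ring when $S'$ is finite; this is precisely why the paper's proof splits into a finite case and a long infinite case.

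However, your contradiction step has a genuine gap. The case $W=\{p_0\}$ is correct as written: restricting to the arithmetic progression $n=m_{p_0}k$ and applying lifting-the-exponent gives $b_k=p_0^{e+v_{p_0}(k)}$, whose generating function has poles at every $p_0^j$-th root of unity and so cannot be rational. But your two proposed strategies for $|W|\ge 2$ do not close the gap. The first (Hadamard-quotienting out the $p_0$-primary part) would require the cofactor $g(k)=b_k/p_0^{e+v_{p_0}(k)}$ to be a linear recurrence sequence, which is exactly what is not known. The second (restricting to a residue class $k\equiv 0$ modulo an auxiliary $\mu$ coprime to $p_0$) can be made to work when $W$ is \emph{finite} --- take $\mu=\prod_{p\in W\setminus\{p_0\}}m_pp^{D_p+1}$ and restrict $k$ to be coprime to $\prod_{p\in W\setminus\{p_0\}}p$ --- but it fails when $W$ is infinite, since one cannot impose coprimality to infinitely many primes, and as $v_{p_0}(k)$ grows new primes of $W$ (those with $m_p\mid m_{p_0}p_0^{\infty}$) keep entering the factorization of $a^{m_{p_0}k}-b^{m_{p_0}k}$ in an uncontrolled way.

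Note that the infinite-$W$ case cannot be wished away: $T=\mathcal{P}(\mathbb{Q})\setminus S$ is contained in $\{p:|r|_p=1\}$, so if $T$ is finite then $W=\{p\in S:|r|_p=1\}$ is automatically cofinite. This is where the paper's proof does its real work. It writes $f_S(n)f_T(n)=a^n-b^n$ with $T=\mathcal{P}(\mathbb{Q})\setminus S'$, applies Hadamard to make both factors linear recurrences, writes them as exponential polynomials with integer polynomial coefficients after clearing denominators, and then runs a local argument at large primes $p\nmid abNN'$ on the values $f_S(p^k-1)$ and $f_T(p^k-1)$ to conclude that one of the constant terms $\sum P_i(-1)$, $\sum Q_j(-1)$ vanishes, which in turn forces $p\mid f_S(p^k-1)$ for every large $p$ and hence $S$ to be finite. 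Without some substitute for this global step, your argument only establishes the theorem when $W$ is a singleton (or, with the residue-class refinement spelled out, when $W$ is finite).

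A small remark on the finite case: the paper avoids your natural-boundary computation altogether by invoking Myerson and van der Poorten's theorem that a linear recurrence cannot assume infinitely many values each infinitely often, applied to $f(n)=|r^n-1|_{S'}$ via the choice $n(e)=q^e\prod_{p\in S'}m_pp^{D_p}$. Your dense-poles route gives the same contradiction for $|W|=1$ and is perhaps more self-contained, but either way the heavy lifting is in the infinite case.
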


\begin{proof}
Let~$S'=\{p\in S:|r|_p=1\}$
and~$S''=\{p\in S:|r|_p> 1\}$. Then
\[
f_S(n)=|r^n-1||r|_{S''}^n f(n),
\]
where~$f(n)=|r^n-1|_{S'}$. If~$r=a/b$,
then without loss of generality we can
assume that~$a>|b|$ because~$\zeta_{\theta}=\zeta_{\theta^{-1}}$.
Then
\begin{equation}\label{im_just_a_dirty_dog}
|r^n-1||r|_{S''}^n=a^n-b^n
\Rightarrow
f_S(n)=(a^n-b^n)f(n),
\end{equation}
and this shows immediately that~$F_S$ is rational if~$S'=\varnothing$.

To prove the converse, we assume that~$S'\neq\varnothing$
and aim to show that
the sequence~$f_S$ does not satisfy a linear recurrence
over~$\mathbb{Q}$, which will in turn imply that~$F_S$ is irrational.

To begin with, assume that~$S$ is finite, so
the sequence~$f=(f(n))$ lies in a finitely generated
extension of~$\mathbb{Z}$. For a contradiction,
assume that~$f_S$ is given by a linear recurrence relation.
Then, using (\ref{im_just_a_dirty_dog}) and the Hadamard quotient theorem, it follows that~$f$ also satisfies a
linear recurrence relation.
Let~$q$ be a rational prime not in~$S$, and define
\[
n(e)=q^e\prod_{p\in S'}m_p p^{D_p},
\]
where~$D_p$ is as in Lemma~\ref{fundamental_evaluation_lemma}
and~$e\geqslant 1$. Applying
Lemma~\ref{fundamental_evaluation_lemma}, we see
that
\[
f(kn(e))=f(n(e))
\]
whenever~$k$ is coprime to~$n(e)$. Hence the sequence~$f$
assumes infinitely many values infinitely often, and so it
cannot satisfy a linear recurrence
by a result of Myerson and van der
Poorten~\cite[Prop.~2]{MR1357486}, giving a contradiction.

Now assume that~$S$ is infinite and
recall that~$S\neq\mathcal{P}(\mathbb{Q})$.
Using the Artin product formula and~(\ref{im_just_a_dirty_dog}), if $T=\mathcal{P}(\mathbb{Q})\setminus S'$, then \[
f_S(n)f_T(n)=|r^n-1||r^n-1|_{S''}=|r^n-1||r|^n_{S''}=a^n-b^n.
\]
Note that both $f_S$ and $f_T$ are positive integer sequences and the product sequence $f_Sf_T$ satisfies a linear recurrence over~$\mathbb{Q}$. Moreover, by the Hadamard quotient theorem, $f_S$ satisfies a linear recurrence over~$\mathbb{Q}$ if and only if~$f_T$
satisfies a linear recurrence over~$\mathbb{Q}$.

Hence, if either $f_S$ or $f_T$ satisfies a linear recurrence, it follows that for~$n$ sufficiently large we have
\begin{equation}
\label{eq: f}
f_S(n) \ = \ \sum_{i=1}^d P_i(n) \alpha_i^n\end{equation}
 and
\begin{equation}
\label{eq: g}
f_T(n) \ = \ \sum_{j=1}^e Q_j(n) \beta_j^n,
\end{equation}
for some natural numbers~$d$ and~$e$ and
rational polynomials
\[
P_1,\ldots,P_d, Q_1,\ldots ,Q_e,
\]
and algebraic
numbers~$\alpha_1,\ldots ,\alpha_d,\beta_1,\ldots ,\beta_e$.
Let~$N$ be a nonzero natural number with the property
that~$NP_i$ and~$NQ_j$ are integer polynomials
and~$N\alpha_i$ and~$N\beta_j$ are algebraic integers
for~$i,j\in \{1,\ldots ,d\}\times \{1,\ldots ,e\}$.
We let~$N'$ denote the product of the norms
of~$\alpha_1,\ldots ,\alpha_d$ and~$\beta_1,\ldots ,\beta_e$.

Let~$p$ be a prime number that does not divide~$abNN'$. We may
then regard~$\alpha_1,\ldots ,\alpha_d,\beta_1,\ldots ,\beta_e$
as elements of the algebraic closure~$\overline{\mathbb{Q}}_p$,
and since~$p$ does not divide~$N$, we have that
\[
|\alpha_i|_p , |\beta_j|\le 1
\]
for~$(i,j)\in \{1,\ldots ,d\}\times \{1,\ldots ,e\}$. Moreover,
since~$p$ does not divide~$N'$, we
have
\[
|\alpha_i|_p=|\beta_j|_p=1
\]
for~$(i,j)\in \{1,\ldots ,d\}\times \{1,\ldots ,e\}$.
Let~$G\in \mathbb{Q}[x]$ be the monic polynomial of smallest
degree that has~$\alpha_1,\ldots ,\alpha_d,\beta_1,\ldots
,\beta_e$ as zeros. Then, by construction,~$G$ has no
coefficients with denominators divisible by~$p$ when written in
lowest terms. We note that~$G$ modulo~$p$ splits in some
extension~$\mathbb{F}_q$ of~$\mathbb{F}_p$, so there is
some~$k\ge 0$ such that
\[
|\alpha_i^{p^k}-\alpha_i|_p<1
\]
and
\[
|\beta_j^{p^k}-\beta_j|_p<1
\]
for~$(i,j)\in \{1,\ldots ,d\}\times\{1,\ldots ,e\}$.
In particular, we have~$|\alpha_i^{p^k-1}-1|_p<1$
for~$i\in \{1,\ldots ,d\}$
and~$|\beta_j^{p^k-1}-1|_p<1$ for~$j\in \{1,\ldots ,e\}$.
We also have
\[
|P_i(p^k-1)-P_i(-1)|_p<1
\]
and
\[
|Q_j(p^k-1)-Q_j(-1)|_p<1
\]
for~$(i,j)\in \{1,\ldots ,d\}\times\{1,\ldots ,e\}$
(simply because~$P_i$ and~$Q_j$ are
integral polynomials).
It follows from~\eqref{eq: f} and~\eqref{eq: g}
that~$p$ divides~$f_S(p^k-1)$ if and only
if~$p$ divides~$\sum_{i=1}^d P_i(-1)$ and~$p$
divides~$f_T(n)$ if and only if~$p$ divides~$\sum_{j=1}^e Q_i(-1)$.
Since~$f_S(p^k-1)f_T(p^k-1)=a^{p^k-1}-b^{p^k-1}\equiv 0~(\bmod \, p)$,
we see that
\[
\left(\sum_{i=1}^d P_i(-1)\right)\left( \sum_{j=1}^e Q_i(-1)\right)
\equiv 0~(\bmod \, p)
\]
for all sufficiently large primes~$p$.
Hence one of~$\sum_{i=1}^d P_i(-1)$ and~$\sum_{j=1}^e Q_j(-1)$
must be zero.
Without loss of generality, assume that~$\sum_{i=1}^d P_i(-1)=0$.
This means that~$p$ divides~$f_S(p^k-1)$ for every prime~$p$
that does not divide~$abNN'$.
In particular,~$p\not \in S$
for any such prime~$p$.

Thus we may assume that~$S$ is finite and non-empty,
and this case has been handled already.
\qed\end{proof}

We record two curious consequences for the special case of
one-dimensional solenoids.

\begin{corollary}
The function~$F_S$ is rational if and only if
the associated zeta function is rational.
\end{corollary}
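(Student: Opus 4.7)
The plan is to derive this corollary as a short consequence of Theorem~\ref{rationality_theorem} and Lemma~\ref{zeta_generating_function_link}, together with one standard power-series identity. One direction is essentially free: if $\zeta_\theta$ is rational then Lemma~\ref{zeta_generating_function_link} (via the identity $F_S(z)=z\zeta_\theta'(z)/\zeta_\theta(z)$) immediately gives that $F_S$ is rational. So the substantive task is the converse.

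For the converse I would assume $F_S$ is rational and invoke Theorem~\ref{rationality_theorem} to conclude $|r|_p\neq 1$ for every $p\in S$, which is exactly the condition $S'=\varnothing$ in the notation of the proof of that theorem. Equation~\eqref{im_just_a_dirty_dog} then collapses to
\[
f_S(n)=\fix_\theta(n)=a^n-b^n,
\]
where $r=a/b$ in lowest terms with $a>|b|$. From here a direct evaluation of the defining exponential, using $-\log(1-tz)=\sum_{n\ge 1}(tz)^n/n$ for $|z|$ small, yields
\[
\zeta_\theta(z)=\exp\sum_{n\ge 1}\frac{a^n-b^n}{n}z^n=\frac{1-bz}{1-az},
\]
which is manifestly rational.

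The main obstacle is really only cosmetic: the corollary is a repackaging of Theorem~\ref{rationality_theorem} together with the elementary fact that the sequence $n\mapsto a^n-b^n$ generates a rational zeta function. The only point that needs verification beyond what is already in the excerpt is the identity $|r^n-1|\cdot|r|_{S''}^n=a^n-b^n$ for both signs of $b$, but this follows from the Artin product formula and is already implicit in the derivation of~\eqref{im_just_a_dirty_dog}, so it does not need to be re-established here.
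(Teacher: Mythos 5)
Your proposal is correct and follows essentially the same route as the paper: one direction via Lemma~\ref{zeta_generating_function_link}, the other via Theorem~\ref{rationality_theorem} reducing to $f_S(n)=a^n-b^n$ and the explicit evaluation $\zeta_\theta(z)=\frac{1-bz}{1-az}$. The only difference is that you spell out the exponential-series computation and the sign check that the paper leaves implicit.
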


\begin{proof}
The proof above shows that
if~$F_S$ is rational then~$f_S(n)=a^n-b^n$
with~$a>\vert b\vert\ge1$,
in which case~$\zeta_\theta(z)=\frac{1-bz}{1-az}$.
The other implication is covered by
Lemma~\ref{zeta_generating_function_link}.
\qed\end{proof}

\begin{corollary}\label{radiusonecorollary}
If~$F_S$ has radius of convergence~$1$,
then~$F_S$ has the unit circle as a natural boundary.
\end{corollary}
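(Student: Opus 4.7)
The plan is to deduce this directly from the P\'olya--Carlson theorem combined with Theorem~\ref{rationality_theorem}. The two ingredients we need are: (i) the coefficient sequence~$(f_S(n))$ consists of integers, and (ii) if~$F_S$ is rational then its radius of convergence is strictly less than~$1$. Given these, the corollary follows immediately, since under the hypothesis of radius of convergence~$1$, P\'olya--Carlson forces either rationality or a natural boundary on~$|z|=1$, and rationality is ruled out.

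For (i), observe that~$f_S(n)=|r^n-1|\cdot|r^n-1|_S$ is a product of adelic absolute values of~$r^n-1$ over the archimedean place together with the primes in~$S$; by the Artin product formula this equals a product of inverse~$p$-adic absolute values over the complementary set~$T$, and hence is a positive integer for every~$n\ge1$. (This is already remarked in the setup preceding the theorem.)

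For (ii), apply Theorem~\ref{rationality_theorem}: if~$F_S$ is rational, then~$|r|_p\ne1$ for all~$p\in S$, and the argument in the theorem shows that in this case~$f_S(n)=a^n-b^n$ with~$a>|b|\ge 1$ (where~$r=a/b$ and we may take~$a>|b|$ using~$\zeta_\theta=\zeta_{\theta^{-1}}$). Since~$r\ne\pm1$ we have~$a\ge 2$, so the radius of convergence of~$F_S$ equals~$1/a<1$. Contrapositively, if the radius of convergence is~$1$, then~$F_S$ cannot be rational.

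Putting these together, the hypothesis of the corollary gives an integer power series of radius of convergence~$1$ which is not rational, so the P\'olya--Carlson theorem quoted in the introduction applies and yields the unit circle as a natural boundary. There is no real obstacle; the substantive work has already been done in Theorem~\ref{rationality_theorem}, and this corollary is essentially a one-line consequence of that theorem together with P\'olya--Carlson.
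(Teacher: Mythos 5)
Your proof is correct and matches the paper's own argument essentially verbatim: both deduce from Theorem~\ref{rationality_theorem} that rationality forces $f_S(n)=a^n-b^n$ with $a>|b|\ge1$ and hence radius of convergence $1/a<1$, then invoke P\'olya--Carlson once irrationality is established. The only addition is your explicit check that the coefficients are integers, which the paper leaves implicit in the setup.
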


\begin{proof}
The theorem above shows that if~$F_S$ is rational,
then~$f_S(n)=a^n-b^n$, where~$a>|b|\geqslant 1$,
so~$F_S$ has radius of convergence~$\frac{1}{a}<1$.
Thus,~$F_S$ having radius of convergence~$1$
implies that~$F_S$ is irrational, and the result
follows from the P\'{o}lya--Carlson Theorem itself.
\qed\end{proof}

As an example of Corollary~\ref{radiusonecorollary}, consider the
case where~$\mathcal{P}(\mathbb{Q})\setminus S$ is finite
and non-empty, so~$f_S(n)$ grows polynomially in~$n$
(as~$f_S$ corresponds to a sequence of periodic point
counts for a system of finite combinatorial rank~\cite{MR2550149}),
and~$F_S$ has radius of convergence~$1$.
Then the corollary shows that~$F_S$ has a natural boundary.
Corollary~\ref{radiusonecorollary} also applies to
many of the cases where~$S$ is infinite, but it is not
straightforward to exibit examples.

We now turn our attention to finite~$S$, for
which the radius of convergence of~$F_S$ is always strictly less
than~$1$. It will be useful to consider
certain rational functions that arise from
congruence conditions, especially for our subsequent work
involving Lambert series. The following
(a
generating function analogue of the Euler product
construction
for certain Dirichlet series)
is readily
established using a simple application of the
inclusion-exclusion principle.

\begin{lemma}\label{useful_rational_functions}
For a finite set of rational primes~$S$,
the function~$H_S(z)=\sum_{n\ge1}z^n$,
where~$n$ runs over all positive integers
with~$p\nmid n$ for all~$p\in S$, is a
rational function of the form
\[
\sum_{I\in\mathcal{I}}\frac{d_Iz^{k_I}}{1-z^{k_I}},
\]
where~$\mathcal{I}=\mathcal{I}(S)$ is a finite indexing
set,~$d_I\in\{-1,1\}$, and
each~$k_I\in\mathbb{N}$ is divisible only by primes
appearing in~$S$.
\end{lemma}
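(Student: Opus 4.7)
The plan is to apply inclusion-exclusion on the $|S|$ divisibility conditions $p\nmid n$ for $p\in S$. For each subset $J\subseteq S$ I would set $k_J=\prod_{p\in J}p$ (with $k_\varnothing=1$); a positive integer $n$ satisfies $p\nmid n$ for all $p\in S$ precisely when $n$ lies outside the union, over $p\in S$, of the sets of multiples of $p$. The standard inclusion-exclusion identity applied to this union then expresses $H_S(z)$ as an alternating sum over $J\subseteq S$ of generating functions counting integers divisible by $k_J$.

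Each such inner generating function is simply a geometric series
\[
\sum_{m\ge1}z^{mk_J}=\frac{z^{k_J}}{1-z^{k_J}},
\]
so taking $\mathcal{I}$ to be the power set of $S$, together with $d_J=(-1)^{|J|}\in\{-1,1\}$ and $k_J$ as above, produces the claimed expression. Each $k_J$ is a product of distinct primes from $S$, hence divisible only by primes appearing in $S$, exactly as required.

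There is no real obstacle here. Because $S$ is finite, the sum over $\mathcal{I}$ is finite, so the identity holds both as an equality of rational functions and as an equality of formal power series (and analytically for $|z|<1$). As the authors hint, the lemma is the generating-function counterpart of the Euler factor $\prod_{p\in S}(1-p^{-s})$ that trims primes in $S$ from a Dirichlet series; the only content is that a finite inclusion-exclusion converts cleanly into a sum of rational functions of the stated shape.
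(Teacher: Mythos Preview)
Your proposal is correct and follows exactly the approach the paper indicates: the paper's proof consists solely of the remark that the lemma ``is readily established using a simple application of the inclusion-exclusion principle,'' and you have supplied precisely that application with the natural indexing set $\mathcal{I}=2^{S}$, $k_J=\prod_{p\in J}p$, and $d_J=(-1)^{|J|}$.
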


For a singleton set, we write more
briefly~$H_{\{p\}}=H_p$. Thus, for example,
\[
H_3(z)=\displaystyle\frac{z}{1-z}-\frac{z^3}{1-z^3}.
\]

The next example is
a simplified account of
the case considered by Everest, Stangoe and
the third author~\cite{MR2180241}. It gives a simple
illustration of the irrational case in
Theorem~\ref{rationality_theorem}, and
the functional equation found
may be used to show the existence of a natural boundary in this
case.

\begin{example}\label{vicky_example}
For $r=2$, consider~$F_{\{2,3\}}$, which is the ordinary generating
function for the periodic point sequence for the map dual
to~$x\mapsto 2x$ on~$\mathbb{Z}[\frac16]$. For this simple
example, we can establish a functional equation to show
that~$F_{\{2,3\}}$ has a natural boundary. Let
\[
F(z)=\sum_{n\ge
1}|2^n-1|_3 z^n,
\]
so that~$F_{\{2,3\}}(z)=F(2z)-F(z)$.
Since~$F$ has radius of convergence~$1$, showing that the
unit circle is a natural boundary for~$F$ is enough to prove
that the circle~$|z|=\frac{1}{2}$ is a natural boundary
for~$F_{\{2,3\}}$. Since
\[
F(z)=
\frac{1}{3}\sum_{2\mid n}|n|_3z^n
+ \sum_{2\nmid n}z^n,
\]
we have~$F(z)=\frac{1}{3}G(z^2)+H_2(z)$,
where~$G(z)=\sum_{n\ge1}|n|_3z^n$. Furthermore,
since~$H_2$ is rational, it is enough to show
that~$G$ has the natural boundary~$|z|=1$ to establish
this for~$F$. Writing~$n=3^e k$,
where~$e\geqslant 0$ and~$3\nmid k$, gives
\[
G(z)=
\sum_{e\geqslant 0}\frac{1}{3^e}\sum_{3\nmid k}z^{3^e k}
=
\sum_{e\geqslant 0}\frac{1}{3^e}H_3(z^{3^e})
=
H_3(z)
+
\frac{1}{3}\sum_{e\geqslant 0}
\frac{1}{3^e}H_3(z^{3^{e+1}}).
\]
It follows that~$G(z)=H_3(z)+\frac{1}{3}G(z^3)$. Using this
functional equation inductively, we deduce that there are
dense singularites of~$G$ on the unit circle,
occuring at~$3^{e}$-th roots of unity,~$e\in\mathbb{N}$.
\end{example}

In general, it is difficult to establish functional
equations of the same sort to demonstrate a natural boundary.
However, for finite sets~$S$, we are nonetheless able to
identify distinguished singularites on the circle of
convergence for~$F_S$ that lead to a natural boundary,
by means of the following calculation.

\begin{theorem}\label{distinguished_singularities}
Let~$S$ be a finite set of rational primes
such that~$|r|_p=1$ for all~$p\in S$,
and let~$F(z)=\sum_{n\geqslant 1}|r^n-1|_S z^n$.
Then there is a constant~$E(S)>0$ such that for
any~$q\in S$ and any~$\delta\in\mathbb{Z}[1/q]$,
with the possible exception of finitely many
values of~$\delta$,
\[
\left|F(\lambda \exp(2\pi \delta i ))\right|\rightarrow\infty
\]
as~$\lambda\rightarrow 1^-$
whenever~$|\delta|_q>q^{E(S)}$.
\end{theorem}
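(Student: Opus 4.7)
The plan is to isolate the arithmetic main term of $F$, rewrite it as a Lambert-style sum involving the rational functions $H_S$ from Lemma~\ref{useful_rational_functions}, and then extract the radial asymptotics at $q$-power roots of unity from the poles of $H_S$ that lie on the unit circle.

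First, I would apply Lemma~\ref{fundamental_evaluation_lemma} to each factor of $|r^n-1|_S=\prod_{p\in S}|r^n-1|_p$. Setting
\[
M=\operatorname{lcm}_{p\in S}\bigl(m_p\, p^{D_p+1}\bigr),\qquad C=\prod_{p\in S}C_p,
\]
every multiple $n$ of $M$ satisfies $|r^n-1|_S = C\,|n|_S$, while each remaining residue class modulo $M$ contributes only finitely many values of $|r^n-1|_S$. Separating these residue classes, and then writing each $n\ge 1$ uniquely as $n=jm$ with $j$ the $S$-part of $n$ and $m$ coprime to every prime in $S$, produces a decomposition
\[
F(z) \;=\; F_{\mathrm{rat}}(z) + C\,|M|_S\, G(z^M),
\qquad
G(w) \;=\; \sum_{\substack{j\ge 1 \\ j\text{ is $S$-smooth}}}\frac{H_S(w^j)}{j},
\]
in which $F_{\mathrm{rat}}$ is rational with a finite pole set on the unit circle, so $F_{\mathrm{rat}}(\lambda\omega)$ is bounded as $\lambda\to 1^-$ for every $\omega$ off this finite set.

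Now fix $q\in S$ and $\delta=a/q^k$ in lowest terms, so $|\delta|_q=q^k$, and set $\omega=e^{2\pi i\delta}$, $\zeta=\omega^M$, $\lambda'=\lambda^M$. Provided $k>\ord_q(M)$, the number $\zeta$ is a primitive $q^{k'}$-th root of unity with $k'=k-\ord_q(M)\ge 1$. I would then split the series for $G(\lambda'\zeta)$ by $e:=\ord_q(j)$. The crucial observation from Lemma~\ref{useful_rational_functions} is that every pole of $H_S$ on the unit circle sits at a root of unity whose order is a squarefree product of primes in $S$; in particular $H_S$ is analytic at every primitive $q^s$-th root of unity with $s\ge 2$. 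Therefore the terms with $e\le k'-2$ stay uniformly bounded as $\lambda'\to 1^-$, and singular contributions come from exactly two ranges. For $e\ge k'$ one has $\zeta^j=1$ and the simple pole of $H_S$ at $w=1$ provides leading behaviour $\phi(S)/(1-w)$ with $\phi(S)=\prod_{p\in S}(1-1/p)$. For $e=k'-1$, $\zeta^j$ is a primitive $q$-th root of unity and the M\"obius expansion $H_S(w)=\sum_{d\mid \prod_{p\in S}p}\mu(d)w^d/(1-w^d)$ produces a simple pole at $\zeta^j$ with residue $\zeta^j\phi(S\setminus\{q\})/q$. Writing $\lambda^j\zeta^j-\zeta^j=\zeta^j(\lambda^j-1)$ cancels the factor $\zeta^j$, so the resulting contribution has a real sign independent of the particular primitive $q$-th root.

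Summing the two kinds of divergent term over $j$ via the asymptotic $1-(\lambda')^j\sim j(1-\lambda')$ yields
\[
G(\lambda'\zeta) \;=\; \frac{K}{1-\lambda'}+O(1), \qquad
K \;=\; \frac{C'\phi(S)\,q^2}{(q^2-1)\,q^{2k'}} - \frac{C'\phi(S\setminus\{q\})}{q^{2k'-1}} \;=\; -\frac{C'\phi(S\setminus\{q\})}{q^{2(k'-1)}(q+1)},
\]
with $C'=\prod_{p\in S\setminus\{q\}}(1-p^{-2})^{-1}$; the last equality uses $\phi(S)=\tfrac{q-1}{q}\phi(S\setminus\{q\})$. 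In particular $K<0$. The computational heart of the argument is precisely this non-cancellation step: one must confirm that the opposite-signed $e=k'-1$ and $e\ge k'$ contributions do not annihilate each other, and the explicit factorisation above rules that out. Combined with the boundedness of $F_{\mathrm{rat}}(\lambda\omega)$ outside its finite pole set, this gives $|F(\lambda\omega)|\to\infty$ as $\lambda\to 1^-$. Finally, choosing $E(S)$ to exceed $\max_{q\in S}\ord_q(M)$ and also large enough that no $\omega=e^{2\pi i\delta}$ with $|\delta|_q>q^{E(S)}$ is a pole of $F_{\mathrm{rat}}$ converts the hypothesis $|\delta|_q>q^{E(S)}$ into the two working conditions $k'\ge 1$ and $\omega\notin\operatorname{poles}(F_{\mathrm{rat}})$; the exceptional finite set of $\delta$ in the statement corresponds to those $q$-power roots of unity that are poles of $F_{\mathrm{rat}}$ and hence cannot be avoided.
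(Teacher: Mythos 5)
Your opening decomposition is where the argument breaks. You claim that once you separate off the multiples of $M=\operatorname{lcm}_{p\in S}(m_p\,p^{D_p+1})$, the remaining residue classes modulo $M$ contribute only finitely many values of $|r^n-1|_S$, so their sum $F_{\mathrm{rat}}$ is rational. That is false as soon as $|S|\ge 2$. Take $S=\{2,3\}$ with $m_2=1$, $m_3=2$, $D_2=D_3=0$, so $M=6$. For $n\equiv 2\pmod 6$ you have $|r^n-1|_3 = C_3$ but $|r^n-1|_2 = C_2|n|_2$, so along $n=2,8,32,128,\ldots$ the value $|r^n-1|_S$ runs through $C_2C_3/2, C_2C_3/8, C_2C_3/32,\ldots$ — infinitely many values. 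In general, for $n$ divisible by $m_p p^{D_p+1}$ for a \emph{proper nonempty} subset of the $p\in S$, the coefficient $|r^n-1|_S$ behaves like $c\,|n|_{S'}$ for a proper subset $S'\subsetneq S$, which is neither eventually constant nor rational. So $F_{\mathrm{rat}}$ is irrational and can itself be singular at $q$-power roots of unity, and the rest of your argument — which rests on $F_{\mathrm{rat}}$ being bounded off a finite exceptional set — collapses.

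This is exactly the complication the paper's proof is built to handle: it partitions $\mathbb{N}$ into the sets $N(j)$, $j\in J$, and to each piece attaches a subset $S(j)\subseteq S$ so that $|r^n-1|_S=c_j|n|_{S(j)}$ on $N(j)$. Only the pieces with $S(j)=\varnothing$ give a rational function; every $S(j)\neq\varnothing$ contributes a potentially singular term, not just the single piece your $G(z^M)$ captures. There is also a second, more subtle issue with your strategy. The paper avoids any residue computation entirely: it isolates the subsum over indices $(j,u,k)$ with $\ord_q(uj)>E$, observes that on those indices $z^{uj'k}=\lambda^{uj'k}$ is real and positive (the argument $2\pi\delta n$ is an integer multiple of $2\pi$), so that subsum diverges to $+\infty$ with no possibility of cancellation, and then bounds everything else. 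Your residue calculation instead finds a negative leading coefficient $K<0$ for just the $M$-multiple part, which is already a sign mismatch with the $+\infty$ behaviour the paper extracts — another symptom that the $F_{\mathrm{rat}}$ piece you discarded is actually carrying dominant, opposite-signed singular mass. To repair your approach you would need to run the residue bookkeeping over \emph{all} subsets $S'\subseteq S$ (i.e., over all $j\in J$ with $S(j)\neq\varnothing$) and verify the resulting total residue is nonzero; the positivity argument in the paper is designed precisely to avoid having to certify that delicate non-cancellation.
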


\begin{proof}
Let~$m_p$ denote the multiplicative order of~$r$ modulo~$p$ for
any prime~$p$ in~$S$, and note that~$|r^n-1|_p\neq 1$ if and
only if~$m_p\mid n$ by
Lemma~\ref{fundamental_evaluation_lemma}. Let~$T$ comprise the
set of primes in~$S$ together with all those that divide~$m_p$
for some~$p\in S$. Choose~$\delta\in\mathbb{Z}[1/q]$ with
\[
-\ord_q(\delta)>E(S)=1+\max\{\ord_t(m_p):t\in T, p\in S\}
\]
and set~$E=-\ord_q(\delta)-1$. We wish to consider the
behaviour of~$F(z)$ when
\[
z=\lambda\exp(2\pi \delta i)
\]
and~$\lambda\rightarrow 1^-$. To do this, we will split up the
sum defining~$F$ as follows.

Let
\[
J=\left\{
\textstyle\prod_{p\in T} p^{e_p}:0\leqslant e_p< E\mbox{ for all }p\in T\right\}
\]
and for any~$j\in J$, let~$N(j)$ denote the set of positive
integers~$n$ such that
\[
\ord_p(n)=\ord_p(j)
\]
for all~$p\mid j$ and~$\ord_p(n)\geqslant E$ for all~$p\in T$
with~$p\nmid j$, so that~$\{N(j)\}_{j\in J}$ forms a partition
of~$\mathbb{N}$. Notice that
\[
m_p\mid n\mbox{ for some } n\in N(j)
\Longleftrightarrow
m_p\mid n\mbox{ for all } n\in N(j)
\]
for any~$p\in S$.
Furthermore, if we define
\[
S(j)=\{p\in S:p\nmid j\mbox{ and } m_p\mid n\mbox{ for some } n\in N(j)\},
\]
then by~Lemma~\ref{fundamental_evaluation_lemma}, for all~$n\in
N(j)$,
\[
|r^n-1|_S
=
c_j|n|_{S(j)}
\]
for some non-negative rational constant~$c_j$. Hence, we can write
\begin{eqnarray*}
F(z)
& = &
\sum_{j\in J}
\sum_{n\in N(j)}
c_j|n|_{S(j)}z^n\\
& = &
\underbrace{
\sum_{j:S(j)\neq\varnothing}
c_j
\sum_{n\in N(j)}
|n|_{S(j)}z^n
}_{G(z)}
+
\sum_{j:S(j)=\varnothing}
c_j
\sum_{n\in N(j)}
z^n.
\end{eqnarray*}
The second series on the right-hand side (given by summands for
which~$S(j)$ is empty) is a rational function with radius of
convergence~$1$ by Lemma~\ref{useful_rational_functions}, so
has only finitely many singularities on the circle of
convergence~$|z|=1$. Therefore, for all but finitely many
choices of~$\delta\in\mathbb{Z}[1/q]$, this second series is
bounded as~$\lambda\rightarrow 1^-$. From now on assume
that~$\delta$ has been chosen so as to avoid these
singularities. To demonstrate the required conclusion of the
theorem, it is sufficient to show
that~$|G(z)|\rightarrow\infty$ as~$\lambda\rightarrow 1^-$. For
ease of notation, we now omit the clause~$S(j)\neq\varnothing$
when writing~$G(z)$.

For any~$j\in J$, let
\[
U(j)=\left\{
\textstyle\prod_{p\in S(j)} p^{e_p}:e_p \geqslant E\mbox{ for all }p\in S(j)\right\},
\]
and
\[
j'=j\prod_{p\in T\setminus S(j):p\nmid j}p^E.
\]
Note that any~$n\in N(j)$ can be written uniquely in the
form~$n=uj'k$ with~$u$ in~$U(j)$ and~$p\nmid k$ for all~$p\in
S'(j)=S(j)\cup\{p:p\mid j\}$. Hence,
\begin{equation}\label{that_could_cost}
G(z)
 =
\sum_{j}
\sum_{u\in U(j)}
\frac{c_j}{u}
\sum_{k}
z^{uj'k},
\end{equation}
where~$k$ runs through all positive integers with~$p\nmid k$
for all~$p\in S'(j)$. Consider first the terms in the series
for which~$\ord_q(uj)>E$. In this
case,~$z^{uj'k}=\lambda^{uj'k}$ and the inner sum comprises
strictly positive real terms, and diverges
as~$\lambda\rightarrow1^-$. To complete the proof, we will show
that the sum of the remaining terms in the series for~$G(z)$ is
bounded. For these terms, we have~$\ord_q(uj)\leqslant E$.

First suppose~$q\nmid uj$, so~$q\not\in S(j)$ and~$q\nmid j$,
which implies that~$q\not\in S'(j)$. Using
Lemma~\ref{useful_rational_functions} for the corresponding
terms of~$G(z)$, the inner sum in~(\ref{that_could_cost}) may
be written as~$H_{S'(j)}(z^{uj'})$. Moreover, the rational
expression given in Lemma~\ref{useful_rational_functions} shows
that
\begin{equation}\label{somebodys_life}
|H_{S'(j)}(z^{uj'})|\leqslant
\sum_{I\in \mathcal{I}(S'(j))}
\frac{1}{|1-z^{uj'k_I}|},
\end{equation}
and since~$q\not\in S'(j)$,~$\ord_q(uj'k_I)=\ord_q(uj')=E$.
Thus there is a constant~$M(\delta,j)>0$ such
that
\[
|1-z^{uj'k_I}|>M(\delta,j).
\]
Therefore,
using~\eqref{that_could_cost} and~\eqref{somebodys_life}, the sum
of terms of~$G(z)$ for which~$q\nmid uj$ is bounded in absolute
value by
\[
\sum_{j}\sum_{u\in U(j)}\frac{c_j|\mathcal{I}(S'(j))|}{M(\delta,j)u}
=
\sum_{j}\frac{c_j|\mathcal{I}(S'(j))|}{M(\delta,j)}\prod_{p\in S(j)}\frac{1}{p^E(1-1/p)}.
\]

It remains to consider the terms of~$G(z)$ for which~$0<\ord_q(uj)<E$. In this case,~$\ord_q(uj)=\ord_q(j)>0$, so
$q\in S'(j)$.
Let~$S''(j)=S'(j)\setminus\{q\}$, and notice that we may write
\begin{eqnarray}
\nonumber
H_{S'(j)}(w)
& = &
H_{S''(j)}(w)-H_{S''(j)}(w^q)\\
\nonumber
& = &
\sum_{I\in\mathcal{I}(S''(j))}
d_I\left(
\frac{w^{k_I}}{1-w^{k_I}}
-\frac{w^{qk_I}}{1-w^{qk_I}}
\right)\\
\label{as_long_as_i_can}
& = &
\sum_{I\in\mathcal{I}(S''(j))}
d_I\left(
\frac{w^{k_I}+w^{2k_I}+\dots+w^{(q-1)k_I}}{1-w^{qk_I}}
\right),
\end{eqnarray}
where,~$q\nmid k_I$ for all~$I\in\mathcal{I}(S''(j))$.

Once again, for the terms of~$G(z)$ with~$0<\ord_q(uj)<E$, the
inner sum in~(\ref{that_could_cost}) is~$H_{S'(j)}(z^{uj'})$,
and using~(\ref{as_long_as_i_can}) we obtain the bound
\begin{equation}\label{but_im_gonna_bear_it}
|H_{S'(j)}(z^{uj'})|\leqslant
\sum_{I\in \mathcal{I}(S''(j))}
\frac{q-1}{|1-z^{uj'qk_I}|}.
\end{equation}
Furthermore, since
\[
\ord_q(uj'qk_I)=1+\ord_q(uj)\leqslant E,
\]
there is a constant~$M(\delta,j)>0$ such
that~$|1-z^{uj'qk_I}|>M(\delta,j)$. Just as before we can
use~(\ref{that_could_cost}) and~(\ref{but_im_gonna_bear_it}),
to obtain a bound for the remaining terms of~$G(z)$.
\qed\end{proof}

\begin{corollary}\label{with_no_other_man}
If~$S$ is finite and~$F_S$ is irrational, then the circle of
convergence of~$F_S$ is a natural boundary for the function.
\end{corollary}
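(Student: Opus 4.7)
The plan is to reduce this corollary to Theorem~\ref{distinguished_singularities} by recycling the splitting of $f_S(n)$ used in the proof of Theorem~\ref{rationality_theorem}. Writing $r=a/b$ in lowest terms with $a>|b|\ge1$, letting $S'=\{p\in S:|r|_p=1\}$, and setting $f(n)=|r^n-1|_{S'}$ with $F(w)=\sum_{n\ge 1}f(n)w^n$, the identity $|r^n-1|\cdot|r|_{S''}^n=a^n-b^n$ from~\eqref{im_just_a_dirty_dog} gives
\[
f_S(n)=(a^n-b^n)f(n),\qquad F_S(z)=F(az)-F(bz).
\]
Each factor $|r^n-1|_p$ with $p\in S'$ lies in $[0,1]$ and, by Lemma~\ref{fundamental_evaluation_lemma}, equals $1$ whenever $m_p\nmid n$, so $0\le f(n)\le 1$ with $f(n)=1$ infinitely often. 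Hence $F$ has radius of convergence exactly $1$; in particular $F(bz)$ extends holomorphically to $|z|<1/|b|$, and the strict inequality $a>|b|$ ensures this disc strictly contains the closed disc of radius $1/a$.

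Because $F_S$ is irrational, Theorem~\ref{rationality_theorem} forces $S'\ne\varnothing$, so I can invoke Theorem~\ref{distinguished_singularities} for $F$ with parameter set $S'$. Fixing any $q\in S'$, that theorem supplies singularities of $F$ at all but finitely many points $\exp(2\pi i\delta)$ with $\delta\in\mathbb{Z}[1/q]$ and $|\delta|_q>q^{E(S')}$. Such $\delta$ have the form $m/q^k$ with $q\nmid m$ and $k>E(S')$, and these are manifestly dense modulo~$1$, so the corresponding singularities form a dense subset of $|z|=1$. Rescaling by $1/a$ moves this dense set onto the circle $|z|=1/a$, and none of these singularities of $F(az)$ can be cancelled in $F_S(z)=F(az)-F(bz)$ because $F(bz)$ is analytic on an open neighbourhood of that circle. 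This simultaneously identifies $1/a$ as the exact radius of convergence of $F_S$ and exhibits a dense set of singularities on it, which is precisely the assertion that $|z|=1/a$ is a natural boundary for $F_S$.

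The substantive obstacle has already been surmounted in Theorem~\ref{distinguished_singularities}, namely the production of explicit singularities of $F$ despite the cancellation inherent in the factors $|r^n-1|_p$. What remains above is bookkeeping, the only mildly delicate ingredient being the strict inequality $a>|b|$, which is exactly what guarantees that the correction term $F(bz)$ is analytic across $|z|=1/a$ and therefore cannot absorb any of the singularities transferred from Theorem~\ref{distinguished_singularities}.
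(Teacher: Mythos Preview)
Your proof is correct and follows exactly the same route as the paper's: decompose $F_S(z)=F(az)-F(bz)$ via~\eqref{im_just_a_dirty_dog}, then invoke Theorem~\ref{distinguished_singularities} to see that $F$ has the unit circle as a natural boundary, and use $a>|b|$ to conclude that the singularities on $|z|=1/a$ survive in $F_S$. You are simply more explicit than the paper about why the singularities produced by Theorem~\ref{distinguished_singularities} are dense, why $S'\ne\varnothing$, and why $F(bz)$ cannot cancel them.
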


\begin{proof}
Exactly as in the proof of Theorem~\ref{rationality_theorem},
and without loss of generality, we can assume that~$f_S(n)$ is
given by~\eqref{im_just_a_dirty_dog}. Then, as in
Example~\ref{vicky_example}, write
\[
F_S(z)=F(az)-F(bz),
\]
where~$F$ is given by
Theorem~\ref{distinguished_singularities}. Since~$a>|b|$ and
since~$F$ has the unit circle as a natural boundary, this shows
that~$F_S$ has the circle~$|z|=\frac{1}{a}$ as a natural
boundary. \qed\end{proof}

In the next section we will give an alternative proof of
Corollary~\ref{with_no_other_man} for higher-dimensional
solenoids based on the P\'olya--Carlson Theorem. However, this
method does not explicitly reveal why the natural boundary
occurs in the way that
Theorem~\ref{distinguished_singularities} does.

To conclude this section, we consider the occurrence of natural
boundaries for certain infinite sets of primes~$S$. We begin
with the following application of Fabry's gap Theorem (see
Segal~\cite[Sec.~6.4]{MR805682}).

\begin{lemma}\label{fabry_lemma}
Let~$a>1$ and let~$g(n)$ be an integer-valued sequence
satisfying~$g(n)\le a^n$ for all~$n\ge 1$.  Suppose that $g(n)$ does not satisfy a linear recurrence and suppose that for
every real number~$s>1$ there is a sequence of natural
numbers~$n_1<n_2<\cdots~$ with~$n_j/j\to \infty$ such
that~$g(n)<s^n$ for~$n\not \in \{n_1,n_2,\ldots \}$. Then there
is some~$R\in [\frac{1}{a},1]$ such that~$\sum_{n\ge 1} g(n)z^n$
admits the circle~$|z|=R$ as its natural boundary.
\end{lemma}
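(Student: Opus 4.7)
The plan is to split the argument according to whether $R=1$ or $R<1$, applying the P\'olya--Carlson theorem in the former case and Fabry's gap theorem in the latter. First I would pin down $R$. The bound $g(n)\le a^n$ (with $g\ge 0$, as is implicit in the intended dynamical application to periodic point counts) gives $\limsup g(n)^{1/n}\le a$, hence $R\ge 1/a$. If instead $R>1$, then $g(n)^{1/n}$ is eventually strictly less than one, forcing the integer-valued $g(n)$ to vanish for all large $n$; then $F(z)=\sum_{n\ge 1}g(n)z^n$ is a polynomial and $g$ satisfies a (trivial) linear recurrence, contradicting the hypothesis. Thus $R\in[1/a,1]$.

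When $R=1$, the conclusion is immediate from the P\'olya--Carlson theorem stated earlier in the paper. Indeed $F$ has integer coefficients and radius of convergence exactly one, and since $g$ does not satisfy a linear recurrence $F$ cannot be rational, so $|z|=1$ must be a natural boundary for $F$.

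When $R<1$, fix any $s_0$ with $1<s_0<1/R$ and use the hypothesis to choose a sequence $n_1<n_2<\cdots$ with $n_j/j\to\infty$ such that $g(n)<s_0^n$ for $n\notin\{n_1,n_2,\dots\}$. Decompose
\[
F(z)=F_1(z)+F_2(z),\qquad F_1(z)=\sum_{j\ge 1}g(n_j)z^{n_j},\qquad F_2(z)=\sum_{n\notin\{n_j\}}g(n)z^n.
\]
The bound $g(n)<s_0^n$ on the complement gives $F_2$ radius of convergence at least $1/s_0>R$, so $F_2$ is analytic in some open neighbourhood of the closed disk $|z|\le R$. Consequently $F_1=F-F_2$ has radius of convergence exactly $R$, and its nonzero coefficients are supported on the Fabry-gap sequence $\{n_j\}$. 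Fabry's gap theorem then shows that $|z|=R$ is a natural boundary for $F_1$, and because $F_2$ is analytic across this circle, the natural boundary persists for $F=F_1+F_2$.

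The main conceptual point, and the place where one must think rather than compute, is the case split itself: the Fabry-gap hypothesis carries useful information precisely when $R<1$, because then the set $\{n:g(n)\ge s_0^n\}$ can be taken infinite and Fabry applies nontrivially, whereas in the borderline case $R=1$ the exceptional sets collapse to finite sets (since $g(n)<s^n$ holds eventually for every $s>1$) and Fabry's theorem gives nothing. It is exactly this borderline that P\'olya--Carlson is designed to handle, so the two ingredients combine to cover the entire range $R\in[1/a,1]$.
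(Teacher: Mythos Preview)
Your proof is correct and follows essentially the same route as the paper: compute $R^{-1}=\limsup |g(n)|^{1/n}$, dispose of $R=1$ via P\'olya--Carlson, and for $R<1$ pick $s\in(1,1/R)$, split $F$ into a gap series supported on $\{n_j\}$ plus a tail with radius at least $1/s>R$, then apply Fabry's gap theorem to the gap piece. Your write-up is in fact slightly more careful than the paper's in one place: you explicitly rule out $R>1$ using the integrality of $g$ together with the hypothesis that $g$ is not linearly recurrent, whereas the paper simply asserts $R\le 1$.
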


\begin{proof}
Let
\[
R^{-1}=\limsup_{n\rightarrow\infty} g(n)^{1/n}
\]
and notice that~$\frac{1}{a}\le R\le 1$. If~$R=1$, then the
P{\'o}lya--Carlson theorem immediately gives us the result.
Let~$s\in (1,\frac{1}{R})$, let~$N$ denote the set of natural
numbers~$n$ such that~$g(n) < s^n$, and write~$\{n_1,n_2,\ldots
\}$ for the complement of~$N$ in the natural numbers.  By
assumption~$n_j/j\to \infty$.  Let
\[
G(z)=\sum_{n\ge 1} g(n) z^n
\]
and
\[
G_N(z) = \sum_{n\in N} g(n)z^n.
\]
Then~$G_N$ has radius of convergence at least~$\frac{1}{s}$,
which is strictly greater than~$R$.  Hence the set of
singularities of~$G(z)-G_N(z)$ that lie on the circle of
radius~$R$ is identical to the set of singularities of~$G(z)$
on the circle of radius~$R$. However
\[
G(z)-G_N(z)=\sum_j g(n_j)z^{n_j}
\]
has the circle of radius~$R$ as its natural boundary by Fabry's
gap Theorem. \qed\end{proof}

\begin{lemma}\label{lem: s}
Let~$p$ be a rational prime such that~$|r|_p=1$. Then there is
an integer constant~$A$ depending only on~$r$ such that
\[
|r^n-1|_p^{-1}<\left(\frac{\log(n)}{n}+\frac{\ell_p\log(A)}{n}\right)^n,
\]
where~$\ell_p$ denotes the smallest natural number~$n$ for
which~$|r^n-1|_p<\frac{1}{2}$.
\end{lemma}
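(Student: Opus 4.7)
The plan is to reduce the assertion to the clean estimate
\[
|r^n-1|_p^{-1} \leq n \cdot A^{\ell_p}
\]
for an integer $A$ depending only on $r$; the stated inequality then follows by writing $n A^{\ell_p} = \exp(\log n + \ell_p \log A)$ and applying the elementary estimate $(1 + x/n)^n < e^x$ with $x = \log n + \ell_p \log A$ to match the right-hand side.

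First I would observe that $\ell_p$ coincides with the multiplicative order $m_p$ of $r$ modulo $p$: any rational prime $p \geq 2$ satisfies $|r^{m_p}-1|_p \leq 1/p \leq 1/2$, and by Lemma~\ref{fundamental_evaluation_lemma} no smaller $n$ makes $|r^n-1|_p$ drop below $1$. In particular, Lemma~\ref{fundamental_evaluation_lemma} gives $|r^n-1|_p = 1$ whenever $\ell_p \nmid n$, so the bound is trivial in that case, and I reduce to the subcase $\ell_p \mid n$.

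The main step is a Lifting-the-Exponent calculation. Writing $r = a/b$ in lowest terms, the hypothesis $|r|_p = 1$ forces $p \nmid ab$, so $|r^n-1|_p = |a^n-b^n|_p$ and $|r^n-1|_p^{-1} = p^{\ord_p(a^n-b^n)}$. For odd $p$, because $\ell_p \mid p-1$ gives $\gcd(\ell_p,p) = 1$, LTE applied to $a^{\ell_p}, b^{\ell_p}$ yields
\[
\ord_p(a^n-b^n) = \ord_p(a^{\ell_p}-b^{\ell_p}) + \ord_p(n/\ell_p) = \ord_p(a^{\ell_p}-b^{\ell_p}) + \ord_p(n).
\]
Combining this with the obvious bounds $p^{\ord_p(a^{\ell_p}-b^{\ell_p})} \leq |a^{\ell_p}-b^{\ell_p}| \leq (|a|+|b|)^{\ell_p}$ and $p^{\ord_p(n)} \leq n$ gives the clean estimate with $A = |a|+|b|$.

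The main obstacle is the prime $p = 2$, where the LTE identity acquires an extra term involving $\ord_2(a+b)$ and where the Remark following Lemma~\ref{fundamental_evaluation_lemma} allows the possibility $D_2 = 1$. These contribute only a bounded multiplicative correction, which is absorbed by inflating $A$ by a fixed factor (for instance by replacing $|a|+|b|$ with $2(|a|+|b|)$), leaving the form $n \cdot A^{\ell_p}$ of the bound intact. Once the clean estimate is in place, the inequality in the lemma follows by the exponential manipulation indicated above.
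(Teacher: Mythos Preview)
Your approach is essentially the paper's: both establish the ``clean estimate'' $|r^n-1|_p^{-1}\le A^{\ell_p}p^{\ord_p(n)}\le nA^{\ell_p}$, the paper by invoking Lemma~\ref{fundamental_evaluation_lemma} and Remark~\ref{fundamental_lemma_extension}, you by spelling out Lifting-the-Exponent, which is exactly what those results specialise to over~$\mathbb{Q}$. The paper takes $A$ to be the height of $r$ rather than $|a|+|b|$, but this is immaterial.

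One slip: your claim that $\ell_p=m_p$ for \emph{every} prime fails at $p=2$. Your justification only gives $|r^{m_2}-1|_2\le\tfrac12$, not strict inequality; e.g.\ for $r\equiv 3\pmod 4$ one has $m_2=1$ but $|r-1|_2=\tfrac12$, so $\ell_2=2$. The paper makes this distinction explicitly, noting that when $\ell_p\nmid n$ one still has $|r^n-1|_p^{-1}\le 2$ (rather than $=1$) and absorbing this into the bound. You do flag $p=2$ for separate treatment later, so this is an inconsistency in your write-up rather than a fatal gap.

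On the final exponential step: the displayed inequality in the lemma is in fact a misprint---for large $n$ the right-hand side tends to $0$ while the left is always $\ge 1$---and the paper's own line $A^{\ell_p}p^e=\bigl(\frac{e\log p+\ell_p\log A}{n}\bigr)^n$ shares the defect. The statement actually used downstream is $\frac{1}{n}\log(|r^n-1|_p^{-1})\le\frac{\log n}{n}+\frac{\ell_p\log A}{n}$, i.e.\ precisely your clean estimate. Your attempt to reach the printed form via $(1+x/n)^n<e^x$ is therefore chasing a typo, and that inequality runs the wrong way for the purpose in any case.
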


\begin{proof}
First note that
if~$p>2$,~$|r^n-1|_p<1/2\Longleftrightarrow|r^n-1|_p<1$,
so~$\ell_p=m_p$, where~$m_p$ is as in
Lemma~\ref{fundamental_evaluation_lemma}, but this is not the
case if~$p=2$. If~$n$ is not a multiple of~$\ell_p$,
then~$|r^n-1|_p^{-1}\le 2$, by
Lemma~\ref{fundamental_evaluation_lemma} and
Remark~\ref{fundamental_lemma_extension}, so the result is
immediate. For~$n$ a multiple of~$\ell_p$, write~$n=\ell_p p^e
k$ where~$p\nmid k$ and~$e\geqslant 0$. Then, by
Lemma~\ref{fundamental_evaluation_lemma} and
Remark~\ref{fundamental_lemma_extension},
\begin{equation*}\label{eq: mp}
|r^n-1|_p^{-1}
=
|r^{\ell_p p^e k}-1|_p^{-1}
\le
\ell_p |r^{\ell_p}-1|_p^{-1}p^e
\le
A^{\ell_p} p^e,
\end{equation*}
where~$A$ is the height of~$r$ (so~$A$ is an integer constant
depending only on~$r$ and is independent of~$p$). Furthermore,
\[
A^{\ell_p} p^e = \left(\frac{e\log(p)+\ell_p\log(A)}{n}\right)^n,
\]
giving the desired result since~$e\log(p)\le\log(n)$.
\qed\end{proof}

\begin{theorem}
Let~$S$ be a set of primes such
that~$\mathcal{P}(\mathbb{Q})\setminus S=\{p_1<p_2<\cdots \}$
is infinite and satisfies
\[
\frac{\log(p_{n+1})}{p_n}\longrightarrow\infty
\]
as~$n\to\infty$.
Then~$F_S$ admits some circle of radius~$R\le 1$ as its natural
boundary.
\end{theorem}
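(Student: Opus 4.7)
The plan is to verify the hypotheses of Lemma~\ref{fabry_lemma} applied to $g(n) = f_S(n)$. Writing $r = a/b$ in lowest terms and setting $a_0 = |a| + |b|$, the Artin product formula gives
\[
f_S(n) = \prod_{p \notin S}|r^n-1|_p^{-1},
\]
and since $|r|_p = 1$ for all $p \notin S$, each factor equals $p^{\ord_p(a^n-b^n)}$. Thus $f_S(n)$ divides $|a^n-b^n|$ in $\mathbb{Z}$, giving the uniform bound $f_S(n) \le a_0^n$. The gap hypothesis forces $\mathcal{P}(\mathbb{Q})\setminus S$ to be extremely sparse, so $S$ contains infinitely many primes with $|r|_p = 1$, and Theorem~\ref{rationality_theorem} then implies that $f_S$ does not satisfy a linear recurrence.

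For the Fabry sparsity condition, fix $s > 1$ and consider $B_s = \{n : f_S(n) \ge s^n\}$; the aim is to show $|B_s \cap [1,N]| = o(N)$. Any prime $p_j$ contributing non-trivially to $f_S(n)$ divides $a^n - b^n$, hence $p_j \le a_0^n$. Writing $J(X) = \#\{j : p_j \le X\}$, the hypothesis $\log(p_{j+1})/p_j \to \infty$ forces $J(X)$ to grow no faster than an iterated logarithm, so the number $k(n)$ of contributing primes is at most $J(a_0^N)$ for all $n \le N$. For $n \in B_s$, the inequality
\[
\sum_{j:\, m_{p_j} \mid n} \log|r^n - 1|_{p_j}^{-1} \ge n \log s
\]
forces at least one contributing $p_j$ to satisfy $\log|r^n-1|_{p_j}^{-1} \ge n(\log s)/k(n)$. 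Combining this with the estimate $|r^n - 1|_{p_j}^{-1} \le n A^{\ell_{p_j}}$ supplied by Lemma~\ref{lem: s} then yields $\ell_{p_j} \ge c(s)\, n/k(n)$ for an effective constant $c(s) > 0$.

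The a-priori inequality $m_{p_j} \ge (\log p_j)/\log a_0$, which follows from $p_j \mid a^{m_{p_j}} - b^{m_{p_j}}$, combined with $\ell_{p_j} \le 2 m_{p_j}$ and the divisibility $m_{p_j} \mid n$, forces every witnessing $n$ to be of the form $n = t\, m_{p_j}$ with $t = O(J(a_0^N)/c(s))$. Summing over the at most $J(a_0^N)$ possible values of $j$ gives
\[
|B_s \cap [1,N]| = O\!\left(J(a_0^N)^2\right),
\]
which is $o(N)$ by the gap hypothesis. Lemma~\ref{fabry_lemma} then delivers the natural boundary on some circle $|z| = R$ with $R \in [1/a_0, 1]$.

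The principal obstacle I expect lies in the uniformity of the witnessing argument: the constant $c(s)$ and the choice of witnessing index depend on $k(n)$, which itself varies with $n$, and the bookkeeping must be done carefully so that the final count has constants depending only on $s$. Fortunately, the strength of the hypothesis $\log p_{n+1}/p_n \to \infty$ makes $J(X)$ so slow-growing that rough constants suffice to close the estimate.
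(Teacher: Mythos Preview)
Your proposal is correct and follows essentially the same approach as the paper: both arguments verify the hypotheses of Lemma~\ref{fabry_lemma} for $g(n)=f_S(n)$, invoking Theorem~\ref{rationality_theorem} for non-recurrence and the estimate $|r^n-1|_p^{-1}\le nA^{\ell_p}$ from Lemma~\ref{lem:~s} to control the exceptional set $B_s=\{n:f_S(n)\ge s^n\}$. The only organisational difference is that the paper partitions $\mathbb{N}$ into intervals $[\ell_{k-1},\ell_k)$ and shows each contains at most $C=C(s)$ points of $B_s$, whereas you assign each $n\in B_s$ a witnessing prime $p_j$ via pigeonhole and count pairs $(j,t)$ with $n=t\,m_{p_j}$ to get $|B_s\cap[1,N]|=O(J(a_0^N)^2)$; both routes exploit that the hypothesis $\log(p_{n+1})/p_n\to\infty$ makes the relevant counts grow far slower than $N$.
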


\begin{proof}
Let~$T=\mathcal{P}(\mathbb{Q})\setminus S=\{p_1<p _2<\dots\}$.
By our earlier assumptions on~$S$,~$|r|_p=1$ for all~$p\in T$.
Furthermore~$f_S(n)=|r^n-1|_T^{-1}$ by the Artin product
formula.

Let~$s>1$ and let~$C$ be a natural number with~$2\log(A)/C<
\log(s)/2$, where~$A$ is the constant appearing in
Lemma~\ref{lem: s}. For each~$k$, we let~$\ell_k$ denote the
smallest natural number such that~$|r^{\ell_k}-1|_{p_j}<1/2$.
Note that~$A^{\ell_k}>p_k$ and so~$\ell_k>\log(p_k)$.  On the
other hand,~$\ell_k\le p_k-1$, when~$p_k\neq 2$. Since
\[
p_k={\rm o}(\log(p_{k+1}))
\]
and~$\ell_{k+1}>\log(p_{k+1})$, we see
that~$\ell_{k+1}/\ell_k\to \infty$.

Assume that~$k$ is large enough to ensure
that~$\ell_i>C\ell_{i-1}$ for~$i\ge k$
and
\[
\ell_{k-1}>\max\{C\ell_1,\ldots ,C\ell_{k-2}\}.
\]
Let~$n$ be a natural number in~$\{\ell_{k-1},\ldots,\ell_k-1\}$
that is not in~$\{\ell_{k-1} j: j\le C\}$. Then we
have~$f_S(n)=\prod_{i<k} |r^n-1|_{p_i}^{-1}$, since~$p_i$
cannot divide~$f_S(n)$ for~$i\ge k$. Note that if~$n$ is not a
multiple of~$\ell_i$ then $|r^n-1|_{p_i}^{-1}\ge 1/2$; if~$n$
is a multiple of~$\ell_i$ then~$|r^n-1|_{p_i}^{-1} \le s_i^n$,
where~$s_i\le \log(n)/n+\ell_i\log(A)/n$, by Lemma~\ref{lem:
s}. Hence
\[
\frac{\log(f_S(n))}{n}
\le
\sum_{i<k} \left(\frac{\log(n)}{n}+\frac{\ell_i\log(A)}{n}\right)
\le
\frac{k\log(n)}{n}
+
\sum_{i<k} \frac{\ell_i \log(A)}{n}.
\]
Since~$\ell_i/\ell_{i-1}\to \infty$, we see that for~$n$
sufficiently large we must have
\[
\frac{\log(f_S(n))}{n}
<
\left\{
\begin{array}{ll}
k\log(n)/n + 2\ell_{k-1}\log(a)/n & \mbox{ if }\ell_{k-1}\mid n,\\
k\log(n)/n + {\rm o}(1) & \mbox{ if }\ell_{k-1}\nmid n.
\end{array}
\right.
\]
If~$n$ is a multiple of~$\ell_{k-1}$ then we must
have~$n>C\ell_{k-1}$ by construction, and hence in either case
we have
\[
\frac{\log(f_S(n))}{n} < \frac{k\log(n)}{n} + \frac{\log(s)}{2}
\]
for~$k$ sufficiently large. We claim that~$k={\rm
o}(n/\log(n))$. To see this, it is sufficient to show
that~$n={\rm o}(k\log(k))$.  But~$n\ge\ell_{k-1}$
and~$\ell_{k-1}>\log(p_{k-1})$; moreover, by
assumption,~$\log(\log(p_{k-1})))>p_{k-3}$ for~$k$ sufficiently
large and since~$p_{k-3}$ is necessarily greater than~$k$
for~$k$ large, we have
\[
n\ge\ell_{k-1}\ge \log(p_{k-1})>\exp(k)
\]
for~$k$ large, giving the claim. It follows that for~$k$
sufficiently large there are at most~$C$ values of~$n\in
\{\ell_{k-1},\ldots ,\ell_k-1\}$ for which we have $f_S(n)>s^n$.
Hence there is some constant~$B$ such that
\[
|\{n<\ell_{k+1}: f_S(n)>s^n\}| \le Ck+B.
\]
Let~$n_1<n_2<\cdots~$ be those natural numbers with the
property that~$f_S(n)>s^n$; that is,~$f_S(n)>s^n$ if and only
if~$n=n_i$ for some~$i$. Then, given a natural number~$i$,
there is some~$k$ such that $Ck+B<i \le C(k+1)+B$. By the
remarks above we have~$n_i>\ell_{k+1}$.  Thus
\[
\frac{n_i}{i}>\frac{\ell_{k+1}}{Ck+B+C}\to \infty
\]
as~$k\to \infty$, since we have shown that~$\ell_k\ge\exp(k)$
for~$k$ sufficiently large. The result now follows from Lemma
\ref{fabry_lemma}, taking $g(n)=f_S(n)$ and noting that $g(n)$ does not satisfy a non-trivial linear recurrence by Theorem \ref{rationality_theorem}. \qed\end{proof}

\section{Higher-dimensional groups}

A compact connected abelian group~$X$ of dimension~$d\geqslant
1$ has a Pontryagin dual group~$\widehat{X}$ that is a subgroup
of~$\mathbb{Q}^d$. For an automorphism~$\theta:X\rightarrow X$,
we use the following periodic point counting formula, taken
from~\cite[Th.~1.1]{MR2441142}. As before,~$\fix_\theta(n)$
denotes the number of points fixed by the
automorphism~$\theta^n$,~$\mathcal{P}(\mathbb{K})$ denotes the
set of places of the number field~$\mathbb{K}$, and for any set
of places~$S$ in~$\mathcal{P}(\mathbb{K})$, we
write~$|x|_S=\prod_{v\in S}|x|_v$.

\begin{proposition}\label{finite_places_pp_formula}
If~$\theta:X\to X$ is an ergodic automorphism of a finite
dimensional compact connected abelian group, then there exist
algebraic number fields~$\mathbb{K}_1,\dots,\mathbb{K}_k$, sets
of finite places~$T_j\subset \mathcal{P}(\mathbb{K}_j)$ and
elements~$\xi_j\in \mathbb{K}_j$, no one of which is a root of
unity for~$j=1,\dots,k$, such that
\begin{equation}\label{pp_eq}
\fix_\theta(n)=\prod_{j=1}^{k}
|\xi_j^n-1|_{T_j}^{-1}.
\end{equation}
\end{proposition}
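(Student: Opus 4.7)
The plan is to use Pontryagin duality to convert the fixed-point count into an index of submodules, decompose the associated rational vector space by rational canonical form into pieces indexed by the irreducible factors of the minimal polynomial of the dual automorphism, and then express each local contribution as an~$S$-integer norm inside a number field. First I would pass to the dual group~$M = \widehat{X}$, which is a rank-$d$ torsion-free abelian group sitting inside~$V = M \otimes_{\mathbb{Z}} \mathbb{Q} \cong \mathbb{Q}^d$, with~$\widehat\theta$ extending to an invertible~$\mathbb{Q}$-linear endomorphism of~$V$. Pontryagin duality then identifies
\[
\fix_\theta(n) = |M/(\widehat\theta^n - 1)M|,
\]
using surjectivity of~$\theta^n - 1$ on the compact connected group~$X$, and ergodicity translates into the statement that no eigenvalue of~$\widehat\theta$ on~$V$ is a root of unity, which in particular renders the index finite.

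Next I would decompose~$V$ via rational canonical form. Writing the minimal polynomial of~$\widehat\theta$ on~$V$ as~$\prod_{j=1}^{k} p_j(T)^{e_j}$ with distinct monic irreducibles~$p_j \in \mathbb{Q}[T]$, one obtains a~$\widehat\theta$-stable splitting~$V = \bigoplus_{j=1}^{k} V_j$, and each summand is a free module of finite rank over the number field~$\mathbb{K}_j = \mathbb{Q}[T]/(p_j(T))$ on which~$\widehat\theta$ acts as multiplication by the image~$\xi_j$ of~$T$; the ergodicity condition is precisely the statement that no~$\xi_j$ is a root of unity. The submodule~$M$ is finitely generated over the Noetherian ring~$\mathbb{Z}[\widehat\theta^{\pm 1}]$, and using either a~$\widehat\theta$-equivariant filtration whose graded pieces are torsion-free~$\mathbb{Z}[\xi_j^{\pm 1}]$-modules~$M_j \subset V_j$, or a primary decomposition obtained by localizing~$M$ at each rational prime~$\ell$, one arrives at the multiplicative factorization
\[
|M/(\widehat\theta^n-1)M| = \prod_{j=1}^{k} |M_j/(\xi_j^n - 1)M_j|.
\]

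Finally, each~$M_j$ is a finitely generated torsion-free~$\mathbb{Z}[\xi_j^{\pm 1}]$-module inside the~$\mathbb{K}_j$-vector space~$V_j$, and the standard norm--ideal correspondence for the ring of integers~$\mathcal{O}_{\mathbb{K}_j}$ yields an identification
\[
|M_j/(\xi_j^n - 1)M_j| = \prod_{v \in T_j} |\xi_j^n - 1|_v^{-1}
\]
for a suitable set~$T_j$ of finite places of~$\mathbb{K}_j$, determined by the primes at which~$M_j$ fails to be locally unimodular together with those at which~$\xi_j$ is a non-unit. Taking the product over~$j$ then completes the proof. The main obstacle will be the middle step: a general~$\mathbb{Z}[\widehat\theta^{\pm 1}]$-submodule of~$\mathbb{Q}^d$ need not split as a direct sum matching the rational canonical decomposition of~$V$, so the cleanest route is to argue locally~$\ell$-adically at each rational prime and verify that the local indices assemble into the claimed global product, absorbing any discrepancy between~$M$ and a genuine direct sum into the choice of the sets~$T_j$.
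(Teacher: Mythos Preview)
The paper does not supply its own proof of this proposition; it is quoted from Miles~\cite[Th.~1.1]{MR2441142}, and the only indication given is that the construction in~\cite[Sec.~4]{MR2441142} proceeds by regarding $\widehat X$ as a module over $\mathbb{Z}[t^{\pm1}]$ with $t$ acting as $\widehat\theta$, so that $\mathbb{K}_j=\mathbb{Q}(\xi_j)$. Your outline follows the same route: Pontryagin duality turns the fixed-point count into the module index $|M/(\widehat\theta^n-1)M|$, a decomposition reduces to rank-one pieces over rings $\mathbb{Z}[\xi_j^{\pm1}]$, and a place-by-place norm computation yields the product over~$T_j$.

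One step in your sketch needs correcting. When the minimal polynomial has a repeated irreducible factor $p_j^{e_j}$ with $e_j>1$, the primary component $V_j=\ker p_j(\widehat\theta)^{e_j}$ is \emph{not} a $\mathbb{K}_j$-vector space on which $\widehat\theta$ acts as multiplication by $\xi_j$: the restriction of $\widehat\theta$ to $V_j$ is a genuine Jordan block over $\mathbb{K}_j$, so $\mathbb{Q}[\widehat\theta]|_{V_j}\cong\mathbb{Q}[T]/(p_j^{e_j})$ rather than $\mathbb{K}_j$. Hence the displayed factorization $|M/(\widehat\theta^n-1)M|=\prod_j|M_j/(\xi_j^n-1)M_j|$ cannot be read off from the primary decomposition alone. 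The filtration you propose afterwards is exactly the remedy used in the cited reference: take a prime filtration of the Noetherian $\mathbb{Z}[t^{\pm1}]$-module $M$, so that each successive quotient is a torsion-free rank-one $\mathbb{Z}[\xi_j^{\pm1}]$-module, and use multiplicativity of the index along short exact sequences. The same $\xi_j$ may then recur several times in the final product, which is consistent with the statement since the $\mathbb{K}_j$ are not required to be distinct. With that adjustment your plan is sound and coincides with the argument the paper cites.
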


The fields~$\mathbb{K}_j$ and sets of places~$T_j$ appearing
above depend only on~$X$ and~$\theta$, and are obtained by
considering~$\widehat{X}$ as a module over the Laurent
polynomial ring~$\laurent$, where the module structure is given
by identifying multiplication by~$t$ with the application of
the dual map~$\widehat{\theta}$ (a standard procedure for the
study of automorphisms of compact abelian
groups, see Schmidt~\cite{MR1345152} for an overview). The precise method
for obtaining the formula is constructive and is described
in~\cite[Sec.~4]{MR2441142}; it is useful to note from this
that~$\mathbb{K}_j=\mathbb{Q}(\xi_j)$,~$j=1,\dots,k$. As in the
one-dimensional case, applying the Artin product formula
to~(\ref{pp_eq}) gives
\begin{equation}\label{infinite_places_pp_formula}
\fix_\theta(n)=
\prod_{j=1}^{k}
|\xi_j^n-1|_{P_j^\infty\cup S_j},
\end{equation}
where~$P_j^\infty$ denotes the set of infinite places
of~$\mathbb{K}_j$ and~$S_j=\mathcal{P}(\mathbb{K}_j)\setminus
T_j$. It is also worth noting that~\cite[Rmk.~1]{MR2441142}
implies that~$|\xi_j|_v=1$ for all~$v\in T_j$,~$j=1,\dots,k$,
as~$\theta$ is an automorphism. The main result of this section
is the following.

\begin{theorem}\label{main_higher_dimension_theorem}
Under the assumptions of
Proposition~\ref{finite_places_pp_formula}, suppose that the
product in~\eqref{infinite_places_pp_formula} only involves
finitely many places and that~$|\xi_j|_v\neq 1$ for all~$v$
in~$P_j^\infty$ and~$j=1,\dots,k$. Then~$\zeta_\theta$ is
either rational or has a natural boundary at its circle of
convergence, and the latter occurs if and only if~$|\xi_j|_v=1$
for some~$v\in S_j$,~$1\le j\le k$.
\end{theorem}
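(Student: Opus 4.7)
The plan is to apply the Artin product formula to each factor in~\eqref{infinite_places_pp_formula} and reduce the higher-dimensional dichotomy to arguments parallel to those of Section~2. For each~$j$, partition the finite places of~$S_j$ into subsets $S_j^{>}, S_j^{<}, S_j^{=}$ according to whether $|\xi_j|_v$ is greater than, less than, or equal to~$1$. At $v\in S_j^{>}$ we have $|\xi_j^n-1|_v=|\xi_j|_v^n$, at $v\in S_j^{<}$ the factor is~$1$, and the archimedean contribution $|\xi_j^n-1|_{P_j^\infty}$ equals $|N_{\mathbb K_j/\mathbb Q}(\xi_j^n-1)|$. With $A=\prod_{j}|\xi_j|_{S_j^{>}}\in\mathbb Q_{>0}$ this yields
\[
\fix_\theta(n)=L(n)g(n),\quad L(n)=A^n\prod_{j=1}^k|N_{\mathbb K_j/\mathbb Q}(\xi_j^n-1)|,\quad g(n)=\prod_{j=1}^k|\xi_j^n-1|_{S_j^{=}}.
\]
If every $S_j^{=}$ is empty, then $g\equiv1$ and $\fix_\theta=L$; expanding each norm as $\sum_{I\subseteq\{1,\dots,d_j\}}(-1)^{d_j-|I|}\bigl(\prod_{i\in I}\sigma_i(\xi_j)\bigr)^n$ and absorbing the $2$-periodic sign coming from real embeddings with $\sigma(\xi_j)<-1$ as an integer combination of $1$ and $(-1)^n$ produces $L(n)=\sum_\mu e_\mu\mu^n$ with $e_\mu\in\mathbb Z$. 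The standard identity $\exp\sum_n\mu^nz^n/n=(1-\mu z)^{-1}$ then gives $\zeta_\theta(z)=\prod_\mu(1-\mu z)^{-e_\mu}$, a rational function.

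Suppose instead some $S_j^{=}$ is non-empty. The integer-valued reciprocal $\tilde g(n)=g(n)^{-1}=\prod_j|\xi_j^n-1|_{S_j^{=}}^{-1}$ grows only polynomially by Lemma~\ref{fundamental_evaluation_lemma}, so $F_{\tilde g}(z)=\sum_{n\ge1}\tilde g(n)z^n$ has integer coefficients and radius of convergence~$1$. Mimicking the argument of Theorem~\ref{rationality_theorem}---fix a prime~$q$ outside the residue characteristics of the places in $\bigcup_jS_j^{=}$, evaluate $\tilde g$ along $n(e)=q^e\prod_{v\in\bigcup_jS_j^{=}}m_vp^{D_v}$, and invoke~\cite[Prop.~2]{MR1357486}---shows that $\tilde g$ cannot satisfy a linear recurrence. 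Hence $F_{\tilde g}$ is irrational, and the P\'olya--Carlson Theorem forces the unit circle to be its natural boundary. Applying the Hadamard quotient theorem to $\tilde g(n)=L(n)/\fix_\theta(n)\in\mathbb Z$, using the rationality of $F_L(z)=\sum L(n)z^n$, also forces $F(z)=\sum\fix_\theta(n)z^n$ to be irrational.

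The final step transfers the natural boundary from~$F_{\tilde g}$ to~$F$ through the Hadamard product identity $F\odot F_{\tilde g}=F_L$. Writing $L(n)=\sum_ic_i\lambda_i^n$ and $G(w)=\sum_ng(n)w^n$ gives $F(z)=\sum_ic_iG(\lambda_iz)$; any analytic continuation of~$F$ across an arc of its circle of convergence $|z|=1/\alpha$ (with~$\alpha$ the spectral radius of~$L$) would, via Hadamard's multiplication theorem and the density of singularities of~$F_{\tilde g}$ on~$|z|=1$, contradict the finiteness of the singular set of the rational function~$F_L$. Lemma~\ref{zeta_generating_function_link} then transports the natural boundary from~$F$ to~$\zeta_\theta$. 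The principal obstacle is this transfer step: Hadamard's multiplication theorem gives only an inclusion of singular sets, so ruling out cancellations among the terms $c_iG(\lambda_iz)$ and certifying a genuinely dense family of singularities of~$F$ on its circle of convergence is the most delicate part of the argument.
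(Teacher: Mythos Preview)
Your factorisation $\fix_\theta(n)=L(n)g(n)$ and the treatment of the rational case (all $S_j^{=}$ empty) are correct and agree with the paper. The genuine gap is in the transfer step for the irrational case, and you identify it yourself: you prove that $F_{\tilde g}(z)=\sum_{n\ge1}g(n)^{-1}z^n$ has the unit circle as a natural boundary, but what the decomposition $F(z)=\sum_i c_iG(\lambda_iz)$ actually requires is the same statement for $G(z)=\sum_{n\ge1}g(n)z^n$. The attempt to bridge this via the Hadamard product identity $F\odot F_{\tilde g}=F_L$ and Hadamard's multiplication theorem does not work: that theorem only bounds the singular set of a Hadamard product from above by products of singularities of the factors, so knowing $F_L$ has finitely many singularities says nothing about $F$ having a dense set of singularities on $|z|=1/\alpha$. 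There is no contradiction to extract in that direction.

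The paper closes this gap by proving the natural boundary directly for $G$ rather than for $F_{\tilde g}$ (this is Lemma~\ref{derivative_trick_lemma}). The device is to assume $G$ continues beyond $|z|=1$; then so does the function obtained by applying $z\frac{d}{dz}$ a total of $\varrho=\sum_v\varrho_v$ times, namely $\sum_{n\ge1}n^{\varrho}g(n)z^n$. Multiplying by the constant $C=\prod_vC_v^{-1}$ from Lemma~\ref{fundamental_evaluation_lemma} makes the coefficients integers, so P\'olya--Carlson forces $\sum Cn^{\varrho}g(n)z^n$ to be rational; the Hadamard quotient theorem then makes $G$ itself rational, and your Myerson--van der Poorten argument gives the contradiction. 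With the natural boundary for $G$ in hand, the paper uses the hypothesis $|\xi_j|_v\neq1$ for all archimedean $v$ in exactly the place you underused it: it guarantees a \emph{strictly} dominant term $\lambda_J$ with $|\lambda_J|>|\lambda_i|$ for every $i\neq J$, so in $F(z)=\sum_ic_iG(\lambda_iz)$ every summand except $c_JG(\lambda_Jz)$ is analytic on a disc strictly larger than $|z|<|\lambda_J|^{-1}$. The natural boundary of $G$ at $|z|=1$ then transfers to $F$ at $|z|=|\lambda_J|^{-1}$ with no cancellation issue at all, and Lemma~\ref{zeta_generating_function_link} finishes. You had both pieces---the expansion $F=\sum_ic_iG(\lambda_iz)$ and the spectral radius---but applied P\'olya--Carlson to $g^{-1}$ instead of to $n^{\varrho}g$, and then reached for Hadamard multiplication where a simple dominant-term separation suffices.
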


The condition that~$|\xi_j|_v\neq 1$ for all~$v\in
P_j^\infty$,~$j=1,\dots,k$,  is equivalent to the statement
that none of the~$\xi_j$ have algebraic conjugates on the unit
circle. Hence, for example, the theorem applies if each~$\xi_j$
is a Pisot number. We remark that we do not believe the
avoidance of conjugates on the unit circle is necessary for the
dichotomy stated in the theorem, but it is essential in the
proof we provide. Nonetheless, we obtain the following
generalization of Corollary~\ref{with_no_other_man}.

\begin{corollary}
If the dimension of~$X$ is at most~$3$ and the product
in~\eqref{infinite_places_pp_formula} comprises finitely many
places, then~$\zeta_\theta$ is rational or has a natural
boundary at the circle of convergence.
\end{corollary}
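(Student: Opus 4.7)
The plan is to apply Theorem~\ref{main_higher_dimension_theorem}. Its hypothesis on finitely many places in~\eqref{infinite_places_pp_formula} is assumed, so the only task is to verify that $|\xi_j|_v \neq 1$ for every infinite place $v \in P_j^\infty$ and every $j$; equivalently, that no algebraic conjugate of any $\xi_j$ lies on the unit circle.

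First I would translate the topological bound $\dim X \leq 3$ into an arithmetic bound on the fields $\mathbb{K}_j$. The Pontryagin dual $\widehat X$ is torsion-free of rank $\dim X$, so $\widehat X \otimes_{\mathbb{Z}} \mathbb{Q}$ is a $\mathbb{Q}[t^{\pm 1}]$-module of $\mathbb{Q}$-dimension $\dim X$, whose simple summands identify the fields $\mathbb{K}_j = \mathbb{Q}(\xi_j)$ with $t$ acting as multiplication by $\xi_j$. Summing $\mathbb{Q}$-dimensions gives $\sum_j [\mathbb{K}_j : \mathbb{Q}] \leq \dim X \leq 3$, so in particular each $[\mathbb{K}_j : \mathbb{Q}] \leq 3$.

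Next I would verify the absolute-value condition by case analysis on $d_j = [\mathbb{K}_j : \mathbb{Q}]$. For $d_j = 1$, $\xi_j \in \mathbb{Q}^\times \setminus \{\pm 1\}$ since $\xi_j$ is not a root of unity, so $|\xi_j|_\infty \neq 1$. A real conjugate of absolute value $1$ would equal $\pm 1$ and force a rational root of the minimal polynomial, reducing the degree; so for $d_j \geq 2$ only a complex conjugate pair on the unit circle remains to be excluded. For $d_j = 3$, let $p(x) \in \mathbb{Q}[x]$ be the monic irreducible minimal polynomial of $\xi_j$: such a conjugate pair would have product $1$, so by Vieta's formulas the remaining real root would equal $-p(0) \in \mathbb{Q}$, contradicting irreducibility of $p$. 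For $d_j = 2$ one invokes Kronecker's theorem in the sub-case where $\xi_j$ is an algebraic integer; any residual non-integral sub-case can be disposed of by a direct computation, since the infinite-place contribution $(\xi_j^n - 1)(\bar{\xi}_j^n - 1) = 2 - \xi_j^n - \bar{\xi}_j^n$ is then a $\mathbb{Q}$-linear combination of $\xi_j^n$ and $\bar{\xi}_j^n$, so that $\fix_\theta(n)$ satisfies a linear recurrence and the zeta function is already rational.

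With the hypothesis (or the rationality conclusion directly) in place, Theorem~\ref{main_higher_dimension_theorem} delivers the stated dichotomy. The hardest step is the $d_j = 3$ case, and the Vieta argument there is sharp in the dimension: Salem numbers first appear at degree~$4$, providing algebraic conjugates on the unit circle while avoiding roots of unity, so the bound $\dim X \leq 3$ is essentially tight for this line of argument.
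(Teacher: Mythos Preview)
Your approach is the paper's: deduce $[\mathbb{K}_j:\mathbb{Q}]\le 3$ from $\dim X\le 3$, rule out conjugates of $\xi_j$ on the unit circle, and apply Theorem~\ref{main_higher_dimension_theorem}. The paper simply asserts that in degree at most~$3$ a conjugate on the unit circle forces $\xi_j$ to be a root of unity; your case analysis for $d_j\in\{1,3\}$ and for $d_j=2$ with $\xi_j$ an algebraic integer correctly substantiates this assertion, and the Vieta argument for $d_j=3$ is exactly the right mechanism.

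The gap is in your non-integral $d_j=2$ sub-case. From the infinite-place factor $2-\xi_j^n-\bar\xi_j^n$ being a linear recurrence you conclude that $\fix_\theta(n)$ is one and hence that $\zeta_\theta$ is rational, but~\eqref{infinite_places_pp_formula} still carries the finite-place factor $|\xi_j^n-1|_{S_j}$ (and possibly a second index $j'$ with $d_{j'}=1$). If the minimal polynomial is $x^2+bx+1$ with $b\in\mathbb{Q}\setminus\mathbb{Z}$, then $|\xi_j|_v=1$ at every finite place $v$ not lying over a prime dividing the denominator of $b$; any such $v\in S_j$ contributes a factor $|\xi_j^n-1|_v$ that is \emph{not} a linear recurrence, and your rationality claim fails. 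Concretely, the companion matrix of $x^2+\tfrac13 x+1$ acting on $\mathbb{Z}[\tfrac{1}{15}]^2$ gives an ergodic solenoid automorphism with $|\xi_1|=1$ at the archimedean place and with a place over~$5$ in $S_1$ at which $|\xi_1|_v=1$. The paper's own proof does not isolate this sub-case either, so you have located a genuinely delicate point; but your proposed direct computation does not dispose of it.
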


\begin{proof}
If the dimension of~$X$ is at most 3 then each of the field
extensions~$\mathbb{K}_j|\mathbb{Q}$ has degree at most 3.
Therefore, if any algebraic conjugate of any~$\xi_j$ lies on
the unit circle then~$\xi_j$ must be a root of unity. However,
this is precluded by the hypothesis of
Proposition~\ref{finite_places_pp_formula}. \qed\end{proof}

For the proof of Theorem~\ref{main_higher_dimension_theorem} we
use the following.

\begin{lemma}\label{derivative_trick_lemma}
Let~$S$ be a finite list of places of algebraic number fields
and, for each~$v\in S$, let~$\xi_v$ be a non-unit root in the
appropriate number field such that~$|\xi_v|_v=1$. Then the
function
\[
F(z)=\sum_{n\geqslant 1}f(n)z^n,
\]
where~$f(n)=\prod_{v\in S}|\xi_v^n-1|_v$ for~$n\ge1$, has the unit circle as
a natural boundary.
\end{lemma}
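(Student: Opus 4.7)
The plan is to adapt the direct singularity-construction argument of Theorem~\ref{distinguished_singularities} to the multi-place setting. First I would verify that $F$ has radius of convergence exactly~$1$. Each $|\xi_v^n - 1|_v$ is bounded above --- by~$1$ in the non-archimedean case with $|\xi_v|_v = 1$ and by a constant in the archimedean case --- so $f(n)=O(1)$. Conversely, because each $\xi_v$ is not a root of unity, Lemma~\ref{fundamental_evaluation_lemma} (together with equidistribution of $\{n\theta_v\}$ for the archimedean places where $\xi_v = e^{2\pi i\theta_v}$ has $\theta_v$ irrational) gives $f(n)\geqslant c>0$ for a positive-density subset of~$n$, so $\limsup f(n)^{1/n}=1$.

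Next I would partition $\mathbb{N}$ into finitely many state classes. For each non-archimedean $v \in S$ let $p_v, m_v, D_v, \varrho_v$ be as in Lemma~\ref{fundamental_evaluation_lemma}, let $T$ be the set of rational primes dividing $\prod_v p_v m_v$, and pick a parameter $E$ larger than $D_v + \max_{q\in T}\ord_q(m_v)$ for every~$v$. Partitioning $\mathbb{N}$ according to the residue of $n$ modulo $\prod_{p\in T}p^E$ yields finitely many classes $\{N(j)\}_{j\in J}$ on each of which the formula from Lemma~\ref{fundamental_evaluation_lemma} collapses to
\[
f(n)=c_j\prod_{v\in V_j}|n|_{p_v}^{\varrho_v},
\]
with $c_j>0$ a rational constant and $V_j\subseteq S$ the subset of places for which $n\in N(j)$ is in the main case. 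Summands with $V_j=\varnothing$ contribute rational functions with finitely many unit-circle singularities by Lemma~\ref{useful_rational_functions}, and these are harmless.

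Finally I would mimic the divergence argument of Theorem~\ref{distinguished_singularities}. Pick a rational prime $q$ occurring as $p_v$ for some $v$ appearing in a nonempty $V_j$, choose $\alpha\in\mathbb{Z}$ coprime to~$q$ (and avoiding the finite exceptional set coming from the rational part), set $\delta=\alpha/q^{E+1}$, and evaluate $F$ at $z=\lambda e^{2\pi i\delta}$ as $\lambda\to 1^-$. For each class $N(j)$ with $V_j\neq\varnothing$, parametrise $n=n_0(j)\cdot u\cdot k$ with $u$ running over prime-power tuples at the $p_v$'s in~$V_j$ and $k$ coprime to them. The sub-series with $\ord_q(u)$ large enough to force $q^{E+1}\mid n$ has $z^n=\lambda^n$, hence consists of strictly positive real terms summing to a divergent quantity as $\lambda\to 1^-$; all remaining terms are uniformly bounded as $\lambda\to 1^-$ by exactly the estimates already used to control $H_{S'(j)}$ in the proof of Theorem~\ref{distinguished_singularities}. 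Varying $\alpha$ produces a dense set of singularities on the unit circle.

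The hard part will be the bookkeeping when several $v \in S$ share a residue characteristic: the factors $|n|_{p_v}^{\varrho_v}$ then combine into a single higher power, and one must verify that the divergent positive-real contribution for the chosen~$\delta$ genuinely dominates and does not cancel against the bounded oscillatory terms from other classes $N(j')$. This is exactly where taking $E$ large enough relative to every $D_v$ and $\ord_q(m_v)$, and selecting $\delta$ with a very precise $q$-adic valuation, is essential --- the freedom to isolate a specific $q$-adic ``layer'' is what ultimately makes the adaptation from the one-prime setting of Theorem~\ref{distinguished_singularities} go through.
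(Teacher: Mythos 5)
Your route is genuinely different from the paper's. You push the direct singularity construction of Theorem~\ref{distinguished_singularities} to several places over possibly different number fields, whereas the paper proves this lemma \emph{indirectly}, via P\'olya--Carlson itself. The paper supposes $F$ continues past $|z|=1$, applies $z\,d/dz$ exactly $\varrho=\sum_{v\in S}\varrho_v$ times, and observes (via Lemma~\ref{fundamental_evaluation_lemma} and Remark~\ref{fundamental_lemma_extension}) that, after scaling by $C=\prod_v C_v^{-1}$, the resulting series $C\sum_n n^\varrho f(n)z^n$ has \emph{integer} coefficients and radius of convergence~$1$. P\'olya--Carlson then makes it rational; the Hadamard quotient theorem (dividing by the rational $C\sum_n n^\varrho z^n$) makes $F$ rational; and the Myerson--van der Poorten criterion, applied exactly as in Theorem~\ref{rationality_theorem} with $n(e)=q^e\prod_v m_vp_v^{D_v}$, shows $f$ takes infinitely many values infinitely often, so it cannot satisfy a linear recurrence --- contradiction. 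This shortcut avoids all the multi-place bookkeeping you correctly flag as the hard part: differentiating $\varrho$ times absorbs the residue degrees in one stroke, so no valuation-class partition or choice of a distinguished prime $q$ is needed. The trade-off, which the paper notes explicitly just before this section, is that the indirect argument does not locate the singularities, whereas your method, carried out carefully, would exhibit a dense set of them at suitably deep $q$-power roots of unity. Two corrections to your sketch: first, in this lemma and its sole application the places in $S$ are all finite (they are drawn from $S_j\setminus S_j'$), so your archimedean discussion and the appeal to equidistribution of $\{n\theta_v\}$ is misplaced --- Lemma~\ref{fundamental_evaluation_lemma} and the entire $|n|_{p_v}^{\varrho_v}$ mechanism are non-archimedean. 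Second, in the divergence step you must fix $j$ and $u$ and sum over $k$ alone, so that the coefficient $c_j\prod_v|n_0(j)u|_{p_v}^{\varrho_v}$ is genuinely constant along the diverging geometric tail in $k$; lumping together all $n$ with $\ord_q(n)>E$ mixes in coefficients that decay to zero, and the divergence is no longer immediate.
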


\begin{proof}
We will use the notation of
Lemma~\ref{fundamental_evaluation_lemma}. First note that~$F$
has radius of convergence~$1$. For any function~$G$ that is
analytic inside the unit circle and which can be analytically
continued beyond it, both~$G'$ and~$z\longmapsto zG'(z)$ can also
be analytically continued beyond it.

Assume, for the purposes of a contradiction, that~$F$ has
analytic continuation beyond the unit circle.
Let~$\varrho=\sum_{v\in S}\varrho_v$ be the sum of all the
residue degrees, and repeat the process of differentiating then
multiplying by~$z$ precisely~$\varrho$ times, beginning with
the function~$F$ and finally obtaining the function~$\sum_{n\ge1}
n^\varrho f(n) z^n$, which by construction is analytic in a
region strictly containing the open unit disc (by our initial
observation). So too then is the function~$G$ defined
by
\[
G(z)=C\sum_{n\ge1}n^\varrho f(n) z^n,
\]
where~$C=\prod_{v\in S} C_v^{-1}$, the constant~$C_v$ being
given by Remark~\ref{fundamental_lemma_extension}. Note
that
\[
\limsup_{n\rightarrow\infty}(n^\varrho f(n))^{1/n}=1,
\]
so~$G$ also has radius of convergence~$1$.

We claim that~$Cn^\varrho f(n)$ is a positive integer for
all~$n\ge 1$. To see this, it is enough to notice
that~$C_v^{-1} n^{\varrho_v}|\xi^n-1|_v$ is a positive integer for
all~$n\ge 1$, which is shown by
Lemma~\ref{fundamental_evaluation_lemma}. Since~$G$ has integer
coefficients, radius of convergence~$1$ and analytic
continuation beyond the unit circle, it is a rational function
by the P\'olya--Carlson Theorem. Furthermore,~$H(z)=C \sum_{n\ge1}
n^\varrho z^n$ defines a rational function, and~$F$ is the
Hadamard quotient of~$G$ by~$H$. Since the sequence of
coefficients~$f=(f(n))$ is drawn from a finitely generated ring
over~$\mathbb{Z}$, the Hadamard quotient theorem implies
that~$F$ is also a rational function. Therefore,~$f$ is given
by a linear recurrence sequence.

Let~$p_v$ denote the characteristic of the residue
field~$\mathfrak{K}_v$ and let~$q$ be a rational prime coprime
to all~$p_v$ for each~$v\in S$. Proceeding as in the proof of
Theorem~\ref{rationality_theorem}, define $n(e)=q^e\prod_{v\in
S} m_vp_v^{D_v}$, where~$D_v$ is given
by~Lemma~\ref{fundamental_evaluation_lemma}
and~$e\in\mathbb{N}$. Then~$f(kn(e))=f(n(e))$ whenever~$k$ is
coprime to~$n(e)$. Hence, the sequence~$f$ takes on infinitely
many values infinitely often, which is not possible for a
linear recurrence sequence by the result of Myerson and van der
Poorten~\cite{MR1357486}. This contradiction means that the
assumption that~$F$ has analytic continuation beyond the unit
circle is untenable.\qed\end{proof}

\begin{proof}[of Theorem~\ref{main_higher_dimension_theorem}]
Let~$S_j'=\{v\in S_j:|\xi_j|_v\neq 1\}$, let
\[
f(n)=\prod_{j=1}^k|\xi_j^n-1|_{S_j\setminus S_j'},
\]
and let
\[
g(n)=\prod_{j=1}^k|\xi_j^n-1|_{P_j^\infty\cup S_j'}.
\]
So,~$\fix_\theta(n)=f(n)g(n)$ by~(\ref{infinite_places_pp_formula}).
By the ultrametric property
\[
g(n)=\prod_{j=1}^k
|\xi_j|^n_{S_j''}
|\xi_j^n-1|_{P_j^\infty},
\]
where~$S_j''=\{v\in S_j:|\xi_j|_v> 1\}$. Extending the method
of Smale~\cite{MR0228014} for toral automorphisms, we can
expand the product over infinite places using an appropriate
symmetric polynomial (see for example
\cite[Lem.~4.1]{MR2308145}) to obtain an expression of the form
\begin{equation}\label{all_of_my_future2}
g(n)=\sum_{I\in\mathcal{I}}d_I w_I^n,
\end{equation}
where~$\mathcal{I}$ is a finite indexing set,~$d_I\in\{-1,1\}$
and~$w_I\in\mathbb{C}$.

Moreover, since~$|\xi_j|_v\neq 1$ for all~$v\in
P_j^\infty$,~$j=1,\dots,k$, there is a dominant term~$w_J$ in
the expansion~\eqref{all_of_my_future2}, for which
\[
|w_J|
=
\prod_{j=1}^{k}
|\xi_j|_{S_j''}
\prod_{v\in P_j^\infty}
\max\{|\xi_j|_v,1\}
=
\prod_{j=1}^{k}
\prod_{v\in P_j^\infty\cup \mathcal{P}(\mathbb{K}_j)}
\max\{|\xi_j|_v,1\},
\]
and~$|w_J|>|w_I|$ for all~$I\neq J$ (note that~$\log|w_J|$ is
the topological entropy, as given by~\cite{MR961739}).
Furthermore, by~\eqref{all_of_my_future2},
\[
\zeta_\theta(z)
=
\exp\left(\sum_{I\in\mathcal{I}}d_I
\sum_{n\ge 1}
\frac{f(n)(w_Iz)^n}{n}\right).
\]
If~$S_j\setminus S_j'=\varnothing$ for all~$j=1,\dots,k$,
then~$f(n)\equiv 1$, and it follows immediately
that~$\zeta_\theta$ is rational.

Now suppose that~$S_j\setminus S_j'\neq\varnothing$ for
some~$j$. As noted in
Lemma~\ref{zeta_generating_function_link}, we need only exhibit
a natural boundary at the circle of convergence for
\[
\sum_{I\in\mathcal{I}}d_I \sum_{n\ge1} f(n)(w_Iz)^n
\]
to exibit one for~$\zeta_\theta(z)$.
Moreover,~$\limsup_{n\rightarrow\infty}f(n)^{1/n}=1$, so for
each~$I\in\mathcal{I}$, the series
\[
\sum_{n\ge1}f(n)(w_Iz)^n
\]
has radius of convergence~$|w_I|^{-1}$, and
since~$|w_J|^{-1}<|w_I|^{-1}$ for all~$I\neq J$, this means
that it suffices to show that the circle of
convergence~$|z|=|w_J|^{-1}$ is a natural boundary for~$\sum_{n\ge1}
f(n)(w_Iz)^n$. But this is the case precisely when~$\sum_{n\ge1}
f(n)z^n$ has the unit circle as a natural boundary, and this
has already been dealt with by
Lemma~\ref{derivative_trick_lemma}. \qed\end{proof}

We conclude with the following example.

\begin{example}
Suppose~$X$ is a two dimensional solenoid, so~$\widehat{X}\hookrightarrow\mathbb{Q}^2$.
Consider an automorphism~$\theta:X\rightarrow X$ dual to multiplication by the
matrix~$A=\left(
\begin{smallmatrix}
2 & 1\\
1 & 1
\end{smallmatrix}
\right)$
on~$\widehat{X}$. In the case~$X=\mathbb{T}^2$,~$\theta$ is the well-known
cat map which has dynamical zeta function
\begin{equation}\label{cat_map_zeta_function}
\zeta_\theta(z)=\frac{(1-z)^2}{(1-\xi z)(1-\eta z)},
\end{equation}
where~$\xi=(3+\sqrt{5})/2$ and~$\eta=\xi^{-1}=(3-\sqrt{5})/2$.
More generally, the method of~\cite[Sec.~4]{MR2441142} shows that there are
natural~$\mathbb{Z}[\xi^{\pm 1}]$-module
embeddings~$\mathbb{Z}[\xi^{\pm 1}~]\hookrightarrow\widehat{X}\hookrightarrow\mathbb{Q}(\xi)$,
where~$\widehat{X}$ is considered as a~$\mathbb{Z}[\xi^{\pm 1}]$-module by
identifying multiplication by~$A$ with multiplication by~$\xi$.
Dynamically, this means that the cat map on~$\mathbb{T}^2$ is always an algebraic factor
of~$(X,\theta)$. Furthermore, the places appearing in~\eqref{infinite_places_pp_formula}
are a subset of~$\mathcal{P}^\infty(\mathbb{Q}(\xi))\cup\mathcal{P}(\mathbb{Q}(\xi))$ and,
by expanding the product over infinite places, this formula simplifies to
\[
\fix_\theta(n)=(\xi^n+\eta^n-2)|\xi^n-1|_S,
\]
where~$S$ is the set of places~$v\in\mathcal{P}(\mathbb{Q}(\xi))$
for which~$|\cdot|_v$ is unbounded on~$\widehat{X}$ under the natural
embedding~$\widehat{X}\hookrightarrow\mathbb{Q}(\xi)$.
In the case~$S=\varnothing$,~$\zeta_\theta$ is given by~\eqref{cat_map_zeta_function}.
In any other case when~$S$ is finite,~$\zeta_\theta$ is shown to have a natural boundary
on the circle~$|z|=\eta$ by Theorem~\ref{main_higher_dimension_theorem}.
\end{example}


\providecommand{\bysame}{\leavevmode\hbox to3em{\hrulefill}\thinspace}

\end{document}